\let\oldFootnote\footnote
\newcommand\nextToken\relax
\renewcommand\footnote[1]{%
    \oldFootnote{#1}\futurelet\nextToken\isFootnote}
\newcommand\isFootnote{%
    \ifx\footnote\nextToken\textsuperscript{,}\fi}
\newtheorem{assumption}{Assumption}
\newtheorem{proposition}{Proposition}
\newtheorem{lemma}{Lemma}
\newtheorem{thm}{Theorem}
\newtheorem{cor}{Corollary}
\newtheorem{remark}{Remark}
\newtheorem{term}{Term}
\DeclareMathOperator{\co}{\text{co}}
\DeclareMathOperator{\dist}{\text{dist}}
\newcommand{\sg}[1]{{\color{black}{#1}}}
\newcommand{\maria}[1]{{\color{black}{#1}}}
\newenvironment{proofof}[1]{\par
  \pushQED{\qed}%
  \normalfont \topsep6\p@\@plus6\p@\relax
  \trivlist
  \item[\hskip\labelsep
        \bfseries
    Proof of #1\@addpunct{.}]\ignorespaces
}{%
  \popQED\endtrivlist\@endpefalse
}
\begin{document}

\title{Distributed constrained optimization and consensus in uncertain networks via proximal minimization}

\author{Kostas~Margellos, Alessandro~Falsone, Simone~Garatti and Maria~Prandini
\thanks{Research was supported by the European Commission, H2020, under the project UnCoVerCPS, grant number 643921. Preliminary results, related to Sections II-IV of the current manuscript, can be found in \cite{ACC_paper_2016}.}\vspace{-1\baselineskip}
\thanks{K. Margellos is with the Department of Engineering Science, University of Oxford,
Parks Road, OX1 3PJ, Oxford, UK, e-mail: \texttt{kostas.margellos@eng.ox.ac.uk}

A. Falsone, S. Garatti and M. Prandini are with the Dipartimento di Elettronica Informazione e Bioingegneria, Politecnico di Milano,
Piazza Leonardo da Vinci 32, 20133 Milano, Italy, e-mail: \texttt{\{alessandro.falsone, simone.garatti, maria.prandini\}@polimi.it}}}

\maketitle
\IEEEpeerreviewmaketitle

\begin{abstract}
We provide a unifying framework for distributed convex optimization over time-varying networks, in the presence of constraints and uncertainty, features that are typically treated separately in the literature. We adopt a proximal minimization perspective and show that this set-up allows us to bypass the difficulties of existing algorithms while simplifying the underlying mathematical analysis. We develop an iterative algorithm and show convergence of the resulting scheme to some optimizer of the centralized problem.
To deal with the case where the agents' constraint sets are affected by a possibly common uncertainty vector, we follow a scenario-based methodology and offer probabilistic guarantees regarding the feasibility properties of the resulting solution. To this end, we provide a distributed implementation of the scenario approach, allowing agents to use a different set of uncertainty scenarios in their local optimization programs. The efficacy of our algorithm is demonstrated by means of a numerical example related to a regression problem subject to regularization.
\end{abstract}

\begin{IEEEkeywords}
Distributed optimization, consensus, proximal minimization, uncertain systems, scenario approach.
\end{IEEEkeywords}

\section{Introduction} \label{sec:secI}
\IEEEPARstart{O}{ptimization} in multi-agent networks has attracted significant attention in the control and signal processing literature, due to its applicability in different domains like power systems \cite{Bolognani_etal_2015,Zhang_Giannakis_2015}, wireless networks \cite{Mateos_Giannakis_2012,Baingana_etal_2014}, robotics \cite{Martinez_etal_2007}, etc. Typically, agents solve a local decision making problem, communicate their decisions with other agents and repeat the process on the basis of the new information received. The main objective of this cooperative set-up is for agents to agree on a common decision that optimizes a certain performance criterion for the overall multi-agent system while satisfying local constraints. This distributed optimization \sg{scheme} leads to computational and \sg{communication} savings compared to centralized paradigms, \sg{while allowing agents to keep privacy by exchanging partial information only.}

\subsection{Contributions of this work}
In this paper we deal with distributed convex optimization problems over time-varying networks, under a possibly different constraint set per agent, and in the presence of uncertainty. Focusing first on the deterministic case, we construct an iterative, proximal minimization based algorithm. Proximal minimization, \sg{where a penalty term (proxy) is introduced in the objective function of each agents' local decision problem,} serves as an alternative to (sub)gradient methods. This is interesting per se, since it
constitutes the multi-agent counterpart of connections between proximal algorithms and gradient methods that have been established in the literature for single-agent problems (see \cite{Bertsekas_Tsitsiklis_1997}).
Moreover, as observed in \cite{Bertsekas_2011} with reference to incremental algorithms, the proximal minimization approach  leads to numerically more stable algorithms compared to their gradient-based counterparts. \\
A rigorous and detailed analysis \sg{is provided, showing} that the proposed iterative scheme converges to an optimizer of the centralized problem counterpart. This is achieved without imposing differentiability assumptions or requiring excessive memory capabilities as other methods in the literature (see Section \ref{sec:secIB} for a detailed review).

\sg{We move then to} the case where constraints depend on an uncertain parameter and should be robustly satisfied for all values that this parameter may take. This poses additional challenges when devising a distributed solution methodology. Here, \sg{we exploit} results on scenario-based optimization \cite{Calafiore_Campi_2006,Campi_Garatti_2008,Campi_etal_2009,Campi_Garatti_2011,Garatti_Campi_2013,compression_paper_2015}. In particular, we assume that each agent is provided with \maria{its own, different from the other agents,} set of uncertainty realizations (scenarios) and enforces the constraints \sg{corresponding to these scenarios only}.
We then show that our distributed algorithm is applicable and that the converged solution is feasible in a probabilistic sense for the constraints of the centralized problem, i.e., it satisfies with high probability all agents' constraints when an unseen uncertainty instance is realized. To achieve this we rely on the novel contribution of \cite{new_scenario_paper_2015}, which leads to a sharper result compared to the one that would be obtained by a direct application of the basic scenario theory \cite{Campi_Garatti_2008}.
Our approach can be thought of as the data driven counterpart of robust or worst-case optimization paradigms, enabling us to provide a priori guarantees on the probability of constraint satisfaction without imposing any assumptions on the underlying distribution of the uncertainty and its moments, and/or the geometry of the uncertainty sets (e.g., \cite[Chapters 6, 7]{Boyd_2004}); however, providing the overall feasibility statement with a certain confidence. The proposed distributed implementation of the scenario approach, which is instead typically performed in a centralized fashion, \sg{allows for a reduction of the communication burden and the satisfaction of privacy requirements regarding the available knowledge on the uncertain parameter.}

\subsection{Related work} \label{sec:secIB}
Most literature builds on the seminal work of \cite{Tsitsiklis_1984,Tsitsiklis_etal_1986,Bertsekas_Tsitsiklis_1997} (see
also \cite{Boyd_etal_2010,Necoara_etal_2011} and references therein for a more recent problem exposition), where a wide range of \sg{decentralized} optimization problems is considered, using techniques based on gradient descent, dual decomposition, and the method of multipliers. The recent work of \cite{Grammatico_etal_2015} deals with similar problems but from a game theoretic perspective.

Distributed optimization problems, in the absence of constraints \sg{though}, have been considered in \cite{Vicsek_etal_1995,Jadbabaie_etal_2003,Olfati_Saber_etal_2004,Cao_etal_2005,Boyd_etal_2006,Johansson_2007,Nedic_Ozdaglar_2009,Olshevsky_Tsitsiklis_2011,Nedic_Olshevsky_2015,Varagnolo_etal_2015}. In most of these references the underlying network is allowed to be time-varying. In the presence of constraints, the authors of \cite{Johansson_2008,Necoara_Suykens_2009,Necoara_etal_2011,Pu_etal_2015} adopt Newton-based or gradient/subgradient-based approaches and show asymptotic agreement of the agents' solutions to an optimizer of the centralized problem, in \cite{Wei_Ozdaglar_2015,Bianchi_etal_2016} a distributed alternating direction method of multipliers approach is adopted and its convergence properties are analyzed, whereas in \cite{Carlone_etal_2014,Burger_etal_2014} a constraints consensus approach is adopted. In these \sg{contributions}, however, the underlying network is time-invariant, while agents are required to have certain memory capabilities. In a time-varying environment, \sg{as that considered in the present paper,} \cite{Nedic_etal_2010,Zhu_Martinez_2012} \sg{propose} a projected subgradient methodology to solve distributed convex optimization problems in the presence of constraints. In \cite{Nedic_etal_2010}, however, the particular case where the agents' constraint sets are all identical is considered. As a result, the computational complexity of each agents' local optimization program is the same \sg{as that} of the \sg{centralized algorithm}.
Our approach, \sg{which allows for different constraint sets per agent,} is most closely related to the work of  \cite{Zhu_Martinez_2012},  
but we adopt a proximal minimization instead of a subgradient-based perspective, thus avoiding the requirement for gradient/subgradient computation.

In most of the aforementioned references a deterministic set-up is considered.  Results taking into account both constraints and uncertainty have recently appeared in \cite{Lee_Nedic_2013,Towfic_Sayed_2014,Lee_Nedic_2015}.
In \cite{Towfic_Sayed_2014} a penalty-based approach is adopted and convergence of the proposed scheme is shown under the assumption that the algorithm is initialized with some feasible solution, which, however, \sg{can be difficult} to compute. This is not required in the approach proposed in this paper. In \cite{Lee_Nedic_2015}
an asynchronous algorithm is developed for a \sg{quite} particular communication protocol that involves gossiping, i.e., pairwise communication, under stronger regularity conditions (strong convexity of the agents' objective function).

Our set-up is closely related, albeit different from the approach of \cite{Lee_Nedic_2013}, \sg{which proposes a projected gradient descent approach where at every iteration a random extraction of each agents' constraints is performed}. In \cite{Lee_Nedic_2013} \sg{almost sure convergence is proved, but this requires that} different scenarios \sg{are} extracted at every iteration, and these scenarios \sg{must} be independent from each other, and independent across iterations. This creates difficulties in accounting for temporal correlation of the uncertain parameter, and poses challenges if sampling from the underlying distribution is computationally expensive. On the contrary, \sg{in our algorithm each agent is provided with a given number of scenarios (which accounts for data driven optimization too) and the same uncertainty scenarios are used at every iteration}.
\sg{In this case}, convergence in \cite{Lee_Nedic_2013} is not guaranteed, whilst our scenario-based approach provides probabilistic feasibility, \sg{as opposed to almost sure feasibility,} guarantees. \sg{This probabilistic treatment of uncertainty, which is particularly suited to data based optimization, does not appear, to the best of our knowledge, in any of the aforementioned references}.
Moreover, \sg{differently from \cite{Lee_Nedic_2013},} our proximal minimization perspective allows us to bypass the requirement for gradient computations, rendering the developed programs amenable to existing numerical solvers, and do not impose differentiability assumptions on the agents' objective functions and Lipschitz continuity of the objective gradients.

Finally, \sg{it is perhaps worth mentioning that} our approach is fundamentally different from the randomized algorithm of \cite{Carlone_etal_2014}, which is based on iteratively exchanging active constraints over a time-invariant network; in our case the network is time-varying and we do not require for constraint exchange, thus reducing the communication requirements.

\maria{For a quick overview, 
Table \ref{table:Lit} provides a classification of the literature most closely related to our work in terms of communication requirements (which is related to whether the underlying network is time-varying or not) and their ability to deal with different types of constraints (which is also related to the overall computational effort as explained before).}

\begin{footnotesize}
\begin {table}[t]
\begin{center}
\begin{tabular}[t]{| l | c | c | c |  }
 \hline
\multicolumn{2}{|c|}{\diaghead{\theadfont Diag ColumnmnHead II} {Agents' \\ decision vectors}{\\Network} }
& time-invariant & {time-varying }\\
 \hline \hline
\multicolumn{2}{| l |}{unconstrained} & {-}    &{ \cite{Tsitsiklis_1984,Tsitsiklis_etal_1986}, \cite{Jadbabaie_etal_2003,Olfati_Saber_etal_2004,Cao_etal_2005,Boyd_etal_2006,Johansson_2007,Nedic_Ozdaglar_2009,Olshevsky_Tsitsiklis_2011,Nedic_Olshevsky_2015,Varagnolo_etal_2015} } \\ \hline
\multicolumn{2}{| l |}{same constraints} &  { - }   & {\cite{Nedic_etal_2010} }\\
 \hline
	{different } & deterministic & { \cite{Johansson_2008,Necoara_Suykens_2009,Necoara_etal_2011,Pu_etal_2015,Wei_Ozdaglar_2015,Bianchi_etal_2016,Carlone_etal_2014,Burger_etal_2014,Nedic_etal_2010} } & \cite{Zhu_Martinez_2012} \\ \cline{2-4}
	constraints & uncertain & -& \cite{Lee_Nedic_2013}, our work\\
	\hline
\end{tabular}
\end{center}
\caption{Classification of related work.}
\label{table:Lit}
\end{table}
\end{footnotesize}

All the aforementioned references, and our work as well, are concerned with static optimization problems, or problems with discrete time dynamics.
As for distributed optimization for continuous time systems, the interested reader is referred to \cite{Cortes_Bullo_2005,Cortes_Bullo_2009,Wang_Elia_2011,Kia_2011,Kia_etal_2015,Cherukuri_Cortes_2015}, and references therein.

\subsection{Structure of the paper}
The paper unfolds as follows: In Section \ref{sec:SecII} we provide a formal statement of the problem under study, and, focusing on the deterministic case, formulate the proposed distributed algorithm based on proximal minimization; \sg{convergence and optimality are also discussed, but to streamline the presentation all} proofs, \sg{along with some preparatory results and useful relations regarding the agents' local solutions,} are deferred to Section \ref{sec:SecIII}. Section \ref{sec:SecV} deals with the stochastic case where constraints are affected by uncertainty, following a scenario-based methodology. To illustrate the efficacy of our algorithm, Section \ref{sec:exam} provides a distributed implementation of a regression problem subject to $L1$-regularization. Finally, Section \ref{sec:secVI} concludes the paper and provides some directions for future work.
%

\subsection*{Notation} \label{sec:Notation}
$\mathbb{R}$, $\mathbb{R}_+$ denote the real and positive real numbers, and $\mathbb{N}$, $\mathbb{N}_+$ the natural and positive natural numbers, respectively.
For any $x \in \mathbb{R}^n$, $\|x\|$ denotes the Euclidean norm of $x$, whereas for a scalar $a \in \mathbb{R}$, $|a|$ denotes its absolute value. Moreover, $x^\top$ denotes the transpose vector of $x$. For a continuously differentiable function $f(\cdot):~ \mathbb{R}^n \rightarrow \mathbb{R}$, $\nabla f(x)$ is the gradient of $f(x)$. Given a set $X$, we denote by $\co(X)$ its convex hull. We write $\dist(y,X)$ to denote the Euclidean distance of a vector $y$ from a set $X$, i.e., $\dist(y,X) = \inf_{x \in X} \|y-x\|$.

A vector $a \in \mathbb{R}^m$ is said to be a stochastic vector if all its components $a_j$ are non-negative and sum up to one, i.e., $\sum_{j=1}^m a_j = 1$. Consider a square matrix $A \in \mathbb{R}^{m \times m}$ and denote its $i$-th column by $a^i \in \mathbb{R}^m$, $i=1,\ldots,m$. $A$ is said to be doubly stochastic if both its rows and columns are stochastic vectors, i.e., $\sum_{i=1}^m a^i_j = 1$ for all $j=1,\ldots,m$, and $\sum_{j=1}^m a_j^i = 1$ for all $i=1,\ldots,m$.


\section{Distributed constrained convex optimization} \label{sec:SecII}
\subsection{Problem set-up} \label{sec:SecIIA}
We consider a time-varying network of $m$ agents that communicate to cooperatively solve an optimization problem of the form

\begin{align}
\mathcal{P}_{\delta}:~& \min_{x \in \mathbb{R}^{n}} \sum_{i=1}^m f_i(x) \label{eq:P_con_delta} \\
& \text{subject to } x \in \bigcap_{\delta \in \Delta} \bigcap_{i=1}^m X_i(\delta), \nonumber
\end{align}
where $x \in \mathbb{R}^n$ represents a vector of $n$ decision variables, and $\delta \in \Delta$. We assume that $\Delta$ is endowed with a $\sigma$-algebra $\mathcal{D}$ and that $\mathbb{P}$ is a fixed, but possibly unknown, probability measure defined over $\mathcal{D}$.
For each $i=1,\ldots,m$, $f_i(\cdot):~ \mathbb{R}^n \rightarrow \mathbb{R}$ is the objective function of agent $i$, whereas, for any $\delta \in \Delta$, $X_i(\delta) \subseteq \mathbb{R}^n$ is its constraint set\footnote{For any $\delta \in \Delta$, $X_i(\delta)$ is supposed to represent all constraints to the decision vector imposed by agent $i$, including explicit constraints expressed e.g., by inequalities like $h_i(x,\delta) \leq 0$ and restrictions to the domain of the objective function $f_i$.}.

Problem $\mathcal{P}_{\delta}$ is a robust program, where any feasible solution $x$ should belong to $\bigcap_{i=1}^m X_i(\delta)$ for all realizations $\delta \in \Delta$ of the uncertainty. Note that the fact that uncertainty appears only in the constraints and not in the objective functions is without loss of generality; in the opposite case, an epigraphic reformulation would recast the problem in the form of $\mathcal{P}_{\delta}$.

\sg{Due to the presence of uncertainty, problem $\mathcal{P}_{\delta}$ may be very difficult to solve, especially when $\Delta$ is a continuous set. Hence, a proper way to deal with uncertainty must be introduced. Moreover, our perspective is that $f_i(\cdot)$ and $X_i$ represent private information, available only to agent $i$ and/or even though the whole information were available to all agents, imposing all the constraints in one shot, would result in a computationally intensive program. This motivates the use of a distributed algorithm.}

\sg{To ease the exposition of our distributed algorithm,} we focus first on the \sg{following} deterministic variant of $\mathcal{P}_{\delta}$ \sg{with constraint sets being independent of $\delta$:}
\begin{align}
\mathcal{P}:~& \min_{x \in \mathbb{R}^n} \sum_{i=1}^m f_i(x) \label{eq:P_con} \\
& \text{subject to } x \in \bigcap_{i=1}^m X_i. \nonumber
\end{align}
\sg{$\mathcal{P}_{\delta}$ will be revisited in Section \ref{sec:SecV} where we will specify how to deal with the presence of uncertainty.}

Since most of the subsequent results are based on $f_i(\cdot)$ and $X_i$ being convex, we formalize it in the following assumption.
\begin{assumption} \label{ass:Convex}
[Convexity] For each $i=1,\ldots,m$, the function $f_i(\cdot):~ \mathbb{R}^n \rightarrow \mathbb{R}$ and the set $X_i \subseteq \mathbb{R}^n$ are convex.
\end{assumption}

\subsection{A new proximal minimization-based algorithm}

\sg{The pseudo-code of the proposed proximal minimization-based} iterative approach \sg{is given} in Algorithm \ref{alg:Alg1}.
\begin{algorithm}[t]
\caption{Distributed algorithm}
\begin{algorithmic}[1]
\STATE \textbf{Initialization} \\
\STATE ~~~~Set $\{a_j^i(k)\}_{k \geq 0}$, for all $i,j = 1,\ldots,m$. \\
\STATE ~~~~Set $\{c(k)\}_{k \geq 0}$. \\
\STATE ~~~~$k=0$. \\
\STATE ~~~~Consider $x_i(0) \in X_i$, for all $i=1,\ldots,m$. \\
\STATE \textbf{For $i=1,\ldots,m$ repeat until convergence} \\
\STATE ~~~~$z_i(k) = \sum_{j=1}^m a_j^i(k) x_j(k)$. \\
\STATE ~~~~~$x_i(k+1) = \arg \min_{x_i \in X_i} f_i(x_i) + \frac{1}{2 c(k)} \|z_i(k) - x_i\|^2$. \\
\STATE ~~~~$k \leftarrow k+1$.
\end{algorithmic}
\label{alg:Alg1}
\end{algorithm}
Initially, each agent $i$, $i=1,\ldots,m$, starts with some tentative value $x_i(0)$ which belongs to the local constraint set $X_i$ of agent $i$, but not necessarily to $\bigcap_{i=1}^m X_i$.
One sensible choice for $x_i(0)$ is to set it such that $x_i(0) \in \arg \min_{x_i \in X_i} f_i(x_i)$.
At iteration $k$, each agent $i$ constructs a weighted average $z_i(k)$ of the solutions communicated by the other agents and its local one (step 7, Algorithm \ref{alg:Alg1}, \sg{where $a_j^i(k)$ are the weights}). \sg{Then, each agent} solves a local minimization problem, involving its local objective function $f_i(x_i)$ and a quadratic term, penalizing the difference from $z_i(k)$ (step 8, Algorithm \ref{alg:Alg1}, \sg{where the coefficient $c(k)$, which is assumed to be non-increasing with $k$, regulates the relative importance of the two terms}). Note that, unlike $\mathcal{P}$, under Assumption \ref{ass:Convex} and due to the presence of the quadratic penalty term, the resulting problem is strictly convex with respect to $x_i$, and hence admits a unique solution.

For each $k \geq 0$ the information exchange between the $m$ agents can be represented by a directed graph $(V,E_k)$, where the nodes $V = \{1,\ldots,m\}$ are the agents and the set $E_k$ of directed edges \maria{$(j,i)$} \sg{indicating that at time $k$ agent $i$ receives information from agent $j$} is given by
\begin{align}
E_k = \big \{ (j,i):~ a_j^i(k) > 0 \big \}. \label{eq:edges}
\end{align}
From \eqref{eq:edges}, we set $a_j^i(k) = 0$ in the absence of communication. If \maria{$(j,i) \in E_k$} we say that $j$ is a neighboring agent of $i$ at time $k$. Under this set-up, Algorithm \ref{alg:Alg1} provides a fully distributed implementation, where at iteration $k$ each agent $i=1,\ldots,m$ receives information only from neighboring agents. \sg{Moreover, this information exchange is time-varying and may be occasionally} \maria{absent.} \sg{However, the following connectivity and communication assumption is made, where} $E_{\infty} = \big \{ (j,i):~ (j,i) \in E_k \text{ for infinitely many } k \big \}$ \sg{denotes} the set of edges $(j,i)$ \sg{representing} agent pairs that communicate directly infinitely often.
\begin{assumption} \label{ass:Network}
[Connectivity and Communication] The graph $(V,E_{\infty})$ is strongly connected, i.e., for any two nodes there exists a path of directed edges that connects them. Moreover, there exists $T \geq 1$ such that for every $(j,i) \in E_{\infty}$, agent $i$ receives information from a neighboring agent $j$ at least once every consecutive $T$ iterations.
\end{assumption}

Assumption \ref{ass:Network} guarantees that any pair of agents communicates directly infinitely often, and the intercommunication interval is bounded. For further details on the interpretation of the imposed network structure the reader is referred to \cite{Nedic_Ozdaglar_2009,Nedic_etal_2010}.

Algorithm \ref{alg:Alg1} terminates if the iterates maintained by all agents converge. From an implementation point of view, agent $i$, $i=1,\ldots,m$, will terminate its update process if the absolute difference (relative difference can also be used) between two consecutive iterates $\| x_i(k+1) - x_i(k)\|$ keeps below some user-defined tolerance for a number of iterations equal to $T$ (see Assumption \ref{ass:Network}) times the diameter of the graph (i.e., the greatest distance between any pair of nodes connected via an edge in $E_{\infty}$). This is the worst case number of iterations required for an agent to communicate with all others in the network; note that if an agent terminated the process at the first iteration where the desired tolerance is met, then convergence would not be guaranteed since its solution may still change as an effect of other agents updating their solutions.

The proposed iterative methodology resembles the structure of proximal minimization for constrained convex optimization \cite[Chapter 3.4.3]{Bertsekas_Tsitsiklis_1997}.
The difference, however, is that our set-up is distributed and the quadratic term in step 8 does not penalize the deviation of $x_i$ from the previous iterate $x_i(k)$, but from an appropriately weighted average $z_i(k)$. Note that, in contrast with the inspiring work in \cite{Nedic_etal_2010,Zhu_Martinez_2012,Lee_Nedic_2013} addressing $\mathcal{P}$ under a similar set-up but following a projected subgradient approach, our proximal minimization-based approach allows for an intuitive economic interpretation: at every iteration $k$ we penalize a consensus residual proxy by the time-varying coefficient $1/(2 c(k))$, which progressively increases. This can be thought of as a pricing settling mechanism, where the more we delay to achieve consensus the higher the price is.

In the case where $a_j^i(k) = 1/m$ for all $i,j = 1,\ldots,m$, for all $k \geq 0$, that corresponds to a decentralized control paradigm, the solution of our proximal minimization approach coincides with the one obtained when the alternating direction of multipliers \cite{Bertsekas_Tsitsiklis_1997}, \cite{Boyd_etal_2010}, is applied to this problem (see eq. (4.72)-(4.74), p. 254 in \cite{Bertsekas_Tsitsiklis_1997}). In the latter the quadratic penalty term is not added to the local objective function as in step 8 of Algorithm \ref{alg:Alg1}, but to the Lagrangian function of an equivalent problem, and the coefficient $c(k)$ is an arbitrary constant independent of $k$; however, a dual-update step is required. Formal connections between penalty methods and the method of multipliers have been established in \cite{Bertsekas_1976}.

\begin{remark}[Application to a specific problem structure]
Algorithm \ref{alg:Alg1} can be simplified when the underlying optimization problem exhibits a specific structure, namely agents need to agree on a common decision vector $y \in \mathbb{R}^{\bar{n}}$, but each of them decides upon a local decision vector $u_i \in \mathbb{R}^{n_i}$, $i=1,\ldots,m$ as well:
\begin{align}
& \min_{y \in \mathbb{R}^{\bar{n}},\{u_i \in \mathbb{R}^{n_i}\}_{i=1}^m} \sum_{i=1}^m f_i(y,u_i) \nonumber \\
& \text{subject to }  y \in \bigcap_{i=1}^m Y_i, 
\ u_i \in U_i, \ i=1,\ldots,m, \label{eq:P_new_con2}
\end{align}
where $Y_i \in \mathbb{R}^{\bar{n}}$ and $U_i \subseteq \mathbb{R}^{n_i}$, for all $i=1,\ldots,m$.
Provided that Assumptions \ref{ass:Convex}-\ref{ass:Network} hold for problem \eqref{eq:P_new_con2} with $x=(y, u_1, \dots,u_m)$ and $X_i=Y_i\times\mathbb{R}^{n_1}\times \dots \times U_i \times \dots \times \mathbb{R}^{n_m}$, we can rewrite it as $\min_{y \in \mathbb{R}^{\bar{n}}} \sum_{i=1}^m g_i(y)$ subject to $y \in \bigcap_{i=1}^m Y_i$, where $g_i(y) = \min_{u_i \in U_i} f_i(y,u_i)$ and simplify Algorithm \ref{alg:Alg1} by replacing steps 7-8 with:
\begin{align*}
& z_i(k) = \sum_{j=1}^m a_j^i(k) y_j(k), \\
& \big ( y_i(k+1), u_i(k+1) \big ) \nonumber \\
&~~~~~~~~~~~~= \arg\min_{y_i \in Y_i, u_i \in U_i} f_i(y_i,u_i) + \frac{1}{2 c(k)} \|z_i(k) - y_i\|^2. 
\end{align*}
This entails that agents only need to communicate their local estimates $y_i(k)$, $i=1,\ldots,m$, of the common decision vector $y$ while the local solutions related to $u_i$, $i=1,\ldots,m$, need not be exchanged.
\end{remark}

\subsection{Further structural assumptions and communication requirements} \label{sec:SecIIB}

We impose some additional assumptions on the structure of problem $\mathcal{P}$ in \eqref{eq:P_con} and the communication set-up that is considered in this paper. These assumptions will play a crucial role in the \sg{proof of convergence} of Section \ref{sec:SecIII}.

\begin{assumption} \label{ass:CompactLip}
[Compactness] For each $i=1,\ldots,m$, $X_i \subseteq \mathbb{R}^n$ is compact.
\end{assumption}

Note that due to Assumption \ref{ass:CompactLip}, $\co \big ( \bigcup_{i=1}^m X_i \big )$ is also compact. Let then $D \in \mathbb{R}_+$ be such that $\|x\| \leq D$ for all $x \in \co \big ( \bigcup_{i=1}^m X_i \big )$. Moreover, due to Assumptions \ref{ass:Convex} and \ref{ass:CompactLip}, $f_i(\cdot):~ \mathbb{R}^n \rightarrow \mathbb{R}$ is Lipschitz continuous on $X_i$ with Lipschitz constant $L_i \in \mathbb{R}_+$, i.e., for all $i=1,\ldots,m$,
\begin{align}
|f_i(x) - f_i(y)| \leq L_i \|x-y\|, \text{ for all } x,y \in X_i. \label{eq:Lipschitz}
\end{align}

\begin{assumption} \label{ass:SlaterPoint}
[Interior point] The feasibility region $\bigcap_{i=1}^m X_i$ of $\mathcal{P}$ has a non-empty interior, i.e., there exists $\bar{x} \in \bigcap_{i=1}^m X_i$ and $\rho \in \mathbb{R}_+$ such that $\{x \in \mathbb{R}^n:~ \|x-\bar{x}\| < \rho \} \subset \bigcap_{i=1}^m X_i$.
\end{assumption}

Due to Assumption \ref{ass:SlaterPoint}, by the Weierstrass' theorem (Proposition A.8, p. 625 in \cite{Bertsekas_Tsitsiklis_1997}), $\mathcal{P}$ admits at least one optimal solution. Therefore, if we denote by $X^* \subseteq \bigcap_{i=1}^m X_i$ the set of optimizers of $\mathcal{P}$, then $X^*$ is non-empty. Notice also that $f_i(\cdot)$, $i=1,\ldots,m$, is continuous due to the convexity condition of Assumption \ref{ass:Convex}; the addition of Assumption \ref{ass:CompactLip} is to imply Lipschitz continuity. However, $f_i(\cdot)$, $i=1,\ldots,m$, is not required to be differentiable.

We impose the following assumption on the coefficients $\{c(k)\}_{k \geq 0}$, that appear in step 8 of Algorithm \ref{alg:Alg1}.
\begin{assumption} \label{ass:ConvCoef}
[Coefficient $\{c(k)\}_{k \geq 0}$] Assume that for all $k \geq 0$, $c(k) \in \mathbb{R}_+$ and $\{c(k)\}_{k \geq 0}$ is a non-increasing sequence, i.e., $c(k) \leq c(r)$ for all $k \geq r$, with $r \geq 0$. Moreover,
\begin{enumerate}
\item $\sum_{k=0}^{\infty} c(k) = \infty$,
\item $\sum_{k=0}^{\infty} c(k)^2 < \infty$.
\end{enumerate}
\end{assumption}

In standard proximal minimization \cite{Bertsekas_Tsitsiklis_1997} convergence is highly dependent on the appropriate choice of $c(k)$.
Assumption \ref{ass:ConvCoef} is in fact needed to guarantee convergence of Algorithm \ref{alg:Alg1}. A direct consequence of the last part of Assumption \ref{ass:ConvCoef} is that $\lim_{k \rightarrow \infty} c(k) = 0$. One choice for $\{c(k)\}_{k \geq 0}$ that satisfies the conditions of Assumption \ref{ass:ConvCoef} is to select it from the class of generalized harmonic series, e.g., $c(k) = \alpha/(k+1)$ for some $\alpha \in \mathbb{R}_+$.
Note that Assumption \ref{ass:ConvCoef} is in a sense analogous to the conditions that the authors of \cite{Nedic_etal_2010,Zhu_Martinez_2012} impose on the step-size of their subgradient algorithm. It should be also noted that our set-up is synchronous, using the same $c(k)$ for all agents, at every iteration $k$. Extension to an asynchronous implementation is a topic for future work.

In line with \cite{Tsitsiklis_1984,Tsitsiklis_etal_1986,Olshevsky_Tsitsiklis_2011} we impose the following assumptions on the information exchange between the agents.

\begin{assumption} \label{ass:Weights}
[Weight coefficients] There exists $\eta \in (0,1)$ such that for all $i,j \in \{1,\ldots,m\}$ and all $k \geq 0$, $a_j^i(k) \in \mathbb{R}_+ \cup \{0\}$, $a_i^i(k) \geq \eta$, and $a_j^i(k) > 0$ implies that $a_j^i(k) \geq \eta$.
Moreover, for all $k \geq 0$,
\begin{enumerate}
\item $\sum_{j=1}^{m} a_j^i(k) = 1$ for all $i=1,\ldots,m$,
\item $\sum_{i=1}^{m} a_j^i(k) = 1$ for all $j=1,\ldots,m$.
\end{enumerate}
\end{assumption}

Assumptions \ref{ass:Network}  and \ref{ass:Weights} are identical to Assumptions 2 and 5 in \cite{Nedic_etal_2010} (the same assumptions are also imposed in \cite{Zhu_Martinez_2012}), but were reported also here to ease the reader and facilitate the exposition of our results.
Note that these are rather standard for distributed optimization and consensus problems; for possible relaxations the reader is referred to \cite{Nedic_etal_2009,Olshevsky_Tsitsiklis_2011}.
The interpretation of having a uniform lower bound $\eta$, independent of $k$, for the coefficients $a_j^i(k)$ in Assumption \ref{ass:Weights} is that it ensures that each agent is mixing information received by other agents at a non-diminishing rate in time \cite{Nedic_etal_2010}. Moreover, \sg{points 1) and 2) in} Assumption \ref{ass:Weights} ensure that this mixing is a convex combination of the other agent estimates, assigning a non-zero weight to its local one \sg{since} $a_i^i(k) \geq \eta$. Note that satisfying Assumption \ref{ass:Weights} requires agents to agree on an infinite sequence of doubly stochastic matrices (double stochasticity arises due to conditions 1 and 2 in Assumption \ref{ass:Weights}), where $a_j^i(k)$ would be element $(i,j)$ of the matrix at iteration $k$. This agreement should be performed prior to the execution of the algorithm in a centralized manner, and the resulting matrices have to be communicated to all agents via some consensus scheme; this is standard in distributed optimization algorithms of this type (see also \cite{Nedic_etal_2010,Olshevsky_Tsitsiklis_2011,Zhu_Martinez_2012}). It would be of interest to construct doubly stochastic matrices in a distributed manner using the machinery of \cite{Sinkhorn_Knopp_1967}; however, exploiting these results requires further investigation and is outside the scope of the paper.

\subsection{Statement of the main convergence result} \label{sec:SecIIC}
Under the structural assumptions and the communication set-up imposed in the previous subsection, Algorithm \ref{alg:Alg1} converges and agents reach consensus, in the sense that their local estimates $x_i(k)$, $i=1,\ldots,m$, converge to some minimizer of problem $\mathcal{P}$.
This is formally stated in the following theorem, which constitutes \sg{one} main contribution of our paper.

\begin{thm}\label{thm:optimality}
Consider Assumptions \ref{ass:Convex}-\ref{ass:Weights} and Algorithm \ref{alg:Alg1}. We have that, for some minimizer $x^* \in X^*$ of $\mathcal{P}$,
\begin{align}
\lim_{k \rightarrow \infty} \|x_i(k) - x^*\| = 0, \text{ for all } i=1,\ldots,m. \label{eq:optimality}
\end{align}
\end{thm}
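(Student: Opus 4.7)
My plan rests on three pillars: a proximal-step descent inequality, agreement of the agents' local estimates to their running average, and a Fejer-monotonicity argument with respect to the optimizer set $X^*$. First I would exploit the strong convexity of the subproblem in step 8 of Algorithm \ref{alg:Alg1}: the first-order optimality condition at $x_i(k+1)$, combined with convexity of $f_i$ and $X_i$, yields for every $y \in X_i$ and every $k \geq 0$
\[
\|x_i(k+1)-y\|^2 \leq \|z_i(k)-y\|^2 - \|x_i(k+1)-z_i(k)\|^2 + 2c(k)\bigl(f_i(y)-f_i(x_i(k+1))\bigr).
\]
Choosing $y$ to be the Euclidean projection of $z_i(k)$ onto $X_i$ and using the Lipschitz constant $L_i$ in \eqref{eq:Lipschitz}, I can also extract the quantitative bound $\|x_i(k+1)-z_i(k)\| \leq \dist(z_i(k),X_i) + 2c(k)L_i$, which will be invoked repeatedly below.

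To establish consensus, I introduce the running average $\bar{x}(k) = \frac{1}{m}\sum_{i=1}^m x_i(k)$ and rewrite the update as $x_i(k+1) = z_i(k) + \epsilon_i(k)$ with $\epsilon_i(k) := x_i(k+1)-z_i(k)$. Double stochasticity of the weights (Assumption \ref{ass:Weights}) together with Assumption \ref{ass:Network} imply, following the standard machinery of \cite{Tsitsiklis_1984,Nedic_Ozdaglar_2009}, that the transition matrices formed by products of the $A(k)$'s converge geometrically to $\frac{1}{m}\mathbf{1}\mathbf{1}^\top$. Since all iterates are confined to the compact set $\co(\bigcup_i X_i)$ by Assumption \ref{ass:CompactLip} and the perturbations $\|\epsilon_i(k)\|$ are controlled via the bound above, a summation-by-parts argument combined with $\sum_k c(k)^2 < \infty$ should yield $\|x_i(k)-\bar{x}(k)\|\to 0$ and, consequently, $\dist(\bar{x}(k), X_i) \to 0$ for every $i$.

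With consensus in hand, I would sum the fundamental inequality over $i$ with a common reference $y \in \bigcap_i X_i$ (existence guaranteed by Assumption \ref{ass:SlaterPoint}) and exploit column-stochasticity through $\sum_i \|z_i(k)-y\|^2 \leq \sum_i \|x_i(k)-y\|^2$ (Jensen's inequality applied to $\|\cdot\|^2$). Replacing $\sum_i f_i(x_i(k+1))$ by $F(\bar{x}(k+1)) := \sum_i f_i(\bar{x}(k+1))$ up to an $o(1)$ term via Lipschitzness and the agreement just established, and telescoping together with $\sum_k c(k) = \infty$, forces $\liminf_k F(\bar{x}(k)) \leq F(y)$; feasibility of any accumulation point follows from $x_i(k)\in X_i$ plus consensus, so each accumulation point solves $\mathcal{P}$. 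Substituting $y=x^*\in X^*$ back into the summed inequality finally yields a Fejer-like monotonicity of $\sum_i\|x_i(k)-x^*\|^2$ up to a summable error, which upgrades subsequential convergence to convergence of the entire sequence and hence proves \eqref{eq:optimality}.

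The main obstacle I anticipate is the tight coupling between consensus and feasibility: each $z_i(k)$ typically lies outside $X_i$, so the proximal step introduces a perturbation $\epsilon_i(k)$ that must be small enough to close the consensus argument and, simultaneously, mild enough to preserve the Fejer inequality. Striking this balance using only $\sum_k c(k) = \infty$ and $\sum_k c(k)^2 < \infty$, and properly leveraging the interior-point Assumption \ref{ass:SlaterPoint} to avoid degenerate behaviour near the boundary of $\bigcap_i X_i$, is where I expect the bulk of the technical work to lie.
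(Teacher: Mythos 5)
Your skeleton matches the paper's proof almost exactly: your first inequality is precisely the three-point proximal inequality the paper derives via Lemma \ref{lemma:book} (leading to \eqref{eq:sep_obj2} and, after averaging with double stochasticity, to the basic iterate relation of Lemma \ref{lemma:relation}); your mixing argument is the paper's Lemma \ref{lemma:error_v_k} (Lemma 4 of \cite{Nedic_Ozdaglar_2009}); and your endgame --- Fej\'er-type convergence of $\|x_i(k)-x^*\|$ for every $x^*\in X^*$, $\liminf$ optimality from $\sum_k c(k)=\infty$, feasibility of accumulation points via the interior point of Assumption \ref{ass:SlaterPoint} --- is Theorem \ref{thm:alg_conv} plus the paper's closing argument. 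However, there is a genuine gap at exactly the point you flag as the ``anticipated obstacle,'' and your proposed tools do not close it. Your control of the perturbation, $\|\epsilon_i(k)\|\leq \dist(z_i(k),X_i)+2c(k)L_i$, is circular: since $z_i(k)$ is a convex combination of the $x_j(k)\in X_j$ with $X_j\neq X_i$, the term $\dist(z_i(k),X_i)$ is itself of the order of the disagreement $\max_j\|x_j(k)-x_i(k)\|$ you are trying to drive to zero. Feeding this bound into the geometric-mixing recursion produces a perturbation kernel whose weights $\sum_{r} q^{k-r-1}$ sum to $1/(1-q)\geq 1$, so the recursion is not contractive in the consensus error and no summation-by-parts argument with $\sum_k c(k)^2<\infty$ alone will yield $\|x_i(k)-\bar{x}(k)\|\to 0$. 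The same unresolved quantity reappears in your last step: for the Fej\'er upgrade you need the perturbation $\sum_k c(k)\sum_i\|x_i(k+1)-\bar{v}(k+1)\|$ to be \emph{summable}, and consensus error merely tending to zero is not enough for that.

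The paper's device that breaks the circularity is Lemma \ref{lemma:sum_error}: for any $\alpha_1\in(0,1)$, the troublesome weighted consensus term is bounded as in \eqref{eq:sum_e_bound} by $\alpha_1\sum_{k}\sum_i\|e_i(k+1)\|^2$ plus $\alpha_2\sum_k c(k)^2+\alpha_3$, by splitting each cross term with Young's inequality $2xy\leq x^2+y^2$ across the geometric kernel (weighting $c(k)$ against $\|e_i(r+1)\|$) and using the interior-point construction $\bar v(k)$ of \eqref{eq:bar_v_k} together with Lemma \ref{lemma:error_bar_v_k}. Substituting this into the telescoped iterate relation \eqref{eq:sep_obj4}, the error energy on the left absorbs the $\alpha_1$-fraction on the right (since $1-\alpha_1>0$), yielding $\sum_k\sum_i\|e_i(k)\|^2<\infty$ \emph{unconditionally} (Proposition \ref{prop:conv_error}) --- i.e., summability of the errors is extracted from the optimization inequality itself, not from the mixing dynamics. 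Only then do consensus (Proposition \ref{prop:conv_consensus}) and the summability statement \eqref{eq:sum_result} follow, after which your third pillar goes through verbatim. Without this self-absorption step, or an equivalent mechanism, both your consensus claim and your ``summable error'' Fej\'er claim remain unproven.
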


\sg{To streamline the contribution of the paper,} the \sg{rather technical} proof of this statement is deferred to Section \ref{sec:SecIVC}.

\section{Dealing with uncertainty} \label{sec:SecV}

In this section, we revisit problem $\mathcal{P}_{\delta}$ in \eqref{eq:P_con_delta}, and \sg{give} a methodology to deal with the presence of uncertainty. Motivated by data driven considerations, we assume that each agent $i$, $i=1,\ldots,m$, is provided with a
fixed number of realizations of $\delta$, referred to as scenarios, extracted according to the underlying probability measure $\mathbb{P}$ with which $\delta$ takes values in $\Delta$.
According to the information about the scenarios that agents possess, two cases are distinguished in the sequel \sg{(scenarios as a common resource vs. scenarios as a private resource)} and the properties of the corresponding scenario programs are analyzed.

\sg{Throughout,} the following modifications to Assumptions \ref{ass:Convex}-\ref{ass:SlaterPoint}
are imposed: 1) For each $i=1,\ldots,m$, $X_i(\delta)$ is a convex set for any $\delta \in \Delta$.
2) For each $i=1,\ldots,m$, and for any finite set $S$ of values for $\delta$, $\bigcap_{\delta \in S} X_i(\delta)$ is compact.
3) For any finite set $S$ of values for $\delta$, $\bigcap_{i=1}^m \bigcap_{\delta \in S} X_i(\delta)$ has a non-empty interior.

For the subsequent analysis, note that for any $N \in \mathbb{N}_+$, $\mathbb{P}^N$ denotes the corresponding product measure. We assume measurability of all involved functions and sets.

\subsection{Probabilistic feasibility - Scenarios as a common resource} \label{sec:SecVB}
We first consider the case where all agents are provided with the same scenarios of $\delta$, i.e., scenarios can be thought of as a common resource for the agents. This is the case if all agents have access to the same set of historical data for $\delta$, or if agents communicate the scenarios with each other. The latter case, however, increases the communication requirements.

Let $\bar{N} \in \mathbb{N}_+$ denote the number of scenarios, and $\bar{S} = \{\delta^{(1)}, \ldots, \delta^{(\bar{N})} \} \subset \Delta$ be the set of scenarios available to all agents. The scenarios are independently and identically distributed (i.i.d.) according to $\mathbb{P}$. Consider then the following optimization program $\mathcal{P}_{\bar{N}}$, where the subscript $\bar{N}$ is introduced to emphasize the dependency with respect to the uncertainty scenarios.
\begin{align}
\mathcal{P}_{\bar{N}}:~& \min_{x \in \mathbb{R}^n} \sum_{i=1}^m f_i(x) \nonumber \\
& \text{subject to } x \in \bigcap_{\delta \in \bar{S}} \bigcap_{i=1}^m X_i(\delta). \label{eq:P_con_N_bar}
\end{align}
Clearly, $x \in \bigcap_{\delta \in \bar{S}} \bigcap_{i=1}^m X_i(\delta)$ is equivalent to $x \in \bigcap_{i=1}^m \bigcap_{\delta \in \bar{S}} X_i(\delta)$, and $\mathcal{P}_{\bar{N}}$ is amenable to be solved via the distributed algorithm of Section \ref{sec:SecIIA}. In fact, one can apply Algorithm \ref{alg:Alg1} with $\bigcap_{\delta \in \bar{S}} X_i(\delta)$ in place of $X_i$, for all $i=1,\ldots,m$.
Let $X_{\bar{N}}^* \subseteq \bigcap_{i=1}^m \bigcap_{\delta \in \bar{S}} X_i(\delta)$ be the set of minimizers of $\mathcal{P}_{\bar{N}}$. We then have the following corollary of Theorem \ref{thm:optimality}.

\begin{cor} \label{cor:optimality_N_bar}
Consider Assumptions \ref{ass:Convex}-\ref{ass:Weights} with the modifications stated in Section \ref{sec:SecV}, and Algorithm \ref{alg:Alg1}. We have that, for some $x_{\bar{N}}^* \in X_{\bar{N}}^*$,
\begin{align}
\lim_{k \rightarrow \infty} \|x_{i,\bar{N}}(k) - x_{\bar{N}}^*\| = 0, \text{ for all } i=1,\ldots,m, \label{eq:optimality_N}
\end{align}
where $x_{i,\bar{N}}(k)$ denotes the solution generated at iteration $k$, step 8 of Algorithm \ref{alg:Alg1}, when $X_i$ is replaced by $\bigcap_{\delta \in \bar{S}} X_i(\delta)$.
\end{cor}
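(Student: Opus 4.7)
The plan is to recognize that Corollary \ref{cor:optimality_N_bar} follows almost immediately by applying Theorem \ref{thm:optimality} to a suitably reformulated deterministic problem. Specifically, I would define $\tilde{X}_i = \bigcap_{\delta \in \bar{S}} X_i(\delta)$ for each $i=1,\ldots,m$, and observe that $\mathcal{P}_{\bar{N}}$ in \eqref{eq:P_con_N_bar} is exactly an instance of the deterministic problem $\mathcal{P}$ in \eqref{eq:P_con} with $X_i$ replaced by $\tilde{X}_i$. Then Algorithm \ref{alg:Alg1}, with this replacement, is precisely the one generating the iterates $x_{i,\bar{N}}(k)$. So, once I verify that Assumptions \ref{ass:Convex}, \ref{ass:CompactLip} and \ref{ass:SlaterPoint} hold for the sets $\tilde{X}_i$, the conclusion will follow directly from Theorem \ref{thm:optimality}, since Assumptions \ref{ass:Network}, \ref{ass:ConvCoef} and \ref{ass:Weights} concern only the network structure, the penalty coefficients and the weight matrices, all of which are unaffected by the choice of constraint sets.

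The verification of the three structural assumptions is immediate from the modifications introduced at the beginning of Section \ref{sec:SecV}. For Assumption \ref{ass:Convex}, convexity of $\tilde{X}_i$ follows from the fact that the intersection of convex sets is convex, together with modification 1) stating that each $X_i(\delta)$ is convex; convexity of $f_i(\cdot)$ is assumed from the outset. For Assumption \ref{ass:CompactLip}, compactness of $\tilde{X}_i$ follows directly from modification 2), since $\bar{S}$ is a finite set of scenarios. Finally, for Assumption \ref{ass:SlaterPoint}, the existence of an interior point of $\bigcap_{i=1}^m \tilde{X}_i = \bigcap_{i=1}^m \bigcap_{\delta \in \bar{S}} X_i(\delta)$ is precisely what is guaranteed by modification 3) applied with $S = \bar{S}$.

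With these three assumptions holding for $\tilde{X}_i$, Theorem \ref{thm:optimality} applies verbatim to the problem $\mathcal{P}_{\bar{N}}$, yielding some $x_{\bar{N}}^* \in X_{\bar{N}}^*$ such that $\lim_{k\to\infty}\|x_{i,\bar{N}}(k)-x_{\bar{N}}^*\|=0$ for all $i=1,\ldots,m$, which is \eqref{eq:optimality_N}. There is no real obstacle here, as everything reduces to a reindexing; the only point worth stressing explicitly is that the set $\bar{S}$ being finite is essential to invoke modifications 2) and 3), and that the common nature of $\bar{S}$ across agents is what makes the reformulation as a deterministic problem with per-agent constraints $\tilde{X}_i$ possible without any further modification of Algorithm \ref{alg:Alg1}.
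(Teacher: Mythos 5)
Your proposal is correct and follows exactly the route the paper intends: the paper states Corollary \ref{cor:optimality_N_bar} as an immediate consequence of Theorem \ref{thm:optimality} after observing that $\mathcal{P}_{\bar{N}}$ is an instance of $\mathcal{P}$ with $\bigcap_{\delta \in \bar{S}} X_i(\delta)$ in place of $X_i$, with the modifications 1)--3) of Section \ref{sec:SecV} supplying Assumptions \ref{ass:Convex}, \ref{ass:CompactLip} and \ref{ass:SlaterPoint} for the new sets. Your explicit verification of the three structural assumptions (including the role of finiteness of $\bar{S}$) simply spells out what the paper leaves implicit, so there is nothing to add.
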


We address the problem of quantifying the robustness of the minimizer $x_{\bar{N}}^*$ of $\mathcal{P}_{\bar{N}}$ to which our iterative scheme converges according to Corollary \ref{cor:optimality_N_bar}. In the current set-up a complete answer is given by the scenario approach theory \cite{Calafiore_Campi_2006,Campi_Garatti_2008}, which shows that $x_{\bar{N}}^*$ is feasible for $\mathcal{P}_{\delta}$ up to a quantifiable level $\bar{\varepsilon}$.
This result is based on the notion of support constraints (see also Definition 4 in \cite{Calafiore_Campi_2006}), and in particular on the notion of support set \cite{new_scenario_paper_2015} (also referred to as compression scheme in \cite{compression_paper_2015}). Given an optimization program, we say that a subset of the constraints constitutes a support set, if it is the minimal cardinality subset of the constraints such that by solving the optimization problem considering only this \sg{subset} of constraints, we obtain the same solution \sg{to} the original problem \sg{where all the constraints are enforced}. As
a consequence, all constraints that do not belong to the support set are in a sense redundant since their removal leaves the optimal solution unaffected.

By Theorem 3 of \cite{Calafiore_Campi_2006}, for any convex optimization program the cardinality of the support set is at most equal to the number of decision variables $n$, whereas in \cite{Schildbach_etal_2013} a refined bound is provided. The subsequent result is valid for any given bound on the cardinality of the support set. Therefore, and since $\mathcal{P}_{\bar{N}}$ is convex, let $d \in \mathbb{N}_+$ be a known upper-bound for the cardinality of its support set.
A direct application of the scenario approach theory in \cite{Calafiore_Campi_2006} leads then to the following result.

\begin{thm} \label{thm:prob_feas_centr}
Fix $\beta \in (0,1)$ and let
\begin{align}
\bar{\varepsilon} = 1 - \sqrt[\bar{N}-d]{\frac{\beta}{{\bar{N} \choose d}}}. \label{eq:eps_explicit_bar}
\end{align}
We then have that
\begin{align}
&\mathbb{P}^{\bar{N}} \Big \{ \bar{S} \in \Delta^{\bar{N}} :~ \nonumber \\
&\mathbb{P} \Big \{ \delta \in \Delta :~ x_{\bar{N}}^* \notin \bigcap_{i=1}^m X_i(\delta) \Big \}  \leq \bar{\varepsilon} \Big \} \geq 1-\beta. \label{eq:prob_all_cons_bar}
\end{align}
\end{thm}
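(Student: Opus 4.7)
The plan is to recognize Theorem \ref{thm:prob_feas_centr} as a direct specialization of the classical scenario bound of \cite{Calafiore_Campi_2006} applied to $\mathcal{P}_{\bar{N}}$, so the work lies almost entirely in checking that the hypotheses of the scenario theorem are met; the combinatorial formula \eqref{eq:eps_explicit_bar} is then inherited verbatim and need not be rederived.

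First I would verify that $\mathcal{P}_{\bar{N}}$ falls in the canonical scenario framework. Under the modified assumptions of Section \ref{sec:SecV}, the objective $\sum_{i=1}^m f_i(x)$ is convex and deterministic, and for every fixed $\delta$ the random constraint $x \in \bigcap_{i=1}^m X_i(\delta)$ is convex as an intersection of convex sets; the scenarios $\delta^{(1)},\ldots,\delta^{(\bar{N})}$ are i.i.d.\ according to $\mathbb{P}$. The modified compactness assumption ensures existence of a minimizer, and the interior-point condition rules out trivial infeasibility. If multiple minimizers exist, I would fix $x_{\bar{N}}^*$ via a standard convex tie-break rule (e.g., Tychonoff regularization or lexicographic selection) to obtain a measurable and uniquely defined representative, as is customary in the scenario literature. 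The crucial structural ingredient is the hypothesis that the support set of $\mathcal{P}_{\bar{N}}$ has cardinality at most $d$: note that each sample $\delta^{(k)}$ contributes one scenario constraint $x\in\bigcap_{i=1}^m X_i(\delta^{(k)})$ (not $m$), since all $m$ sets are activated simultaneously by the same $\delta^{(k)}$, and this is the quantity the scenario bound refers to.

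Second, I would simply invoke Theorem~1 of \cite{Calafiore_Campi_2006}: for any $\beta \in (0,1)$, the joint law $\mathbb{P}^{\bar{N}}$ of the scenario multisample satisfies
\begin{equation*}
\mathbb{P}^{\bar{N}}\bigl\{\bar{S}:\ V(x_{\bar{N}}^*(\bar{S})) > \bar{\varepsilon}\bigr\} \leq \beta,
\end{equation*}
with $\bar{\varepsilon}$ as in \eqref{eq:eps_explicit_bar} and $V(x) = \mathbb{P}\{\delta\in\Delta:\ x \notin \bigcap_{i=1}^m X_i(\delta)\}$. Taking complements of the outer event and expanding $V$ yields \eqref{eq:prob_all_cons_bar}. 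The only mild obstacle is measurability of the map $\bar{S}\mapsto x_{\bar{N}}^*(\bar{S})$ and of the violation event, so that the outer probability is well defined; this is already granted by the blanket measurability assumption stated at the end of the preamble of Section \ref{sec:SecV}, combined with the tie-breaking rule. The combinatorial derivation of \eqref{eq:eps_explicit_bar} itself, which rests on a union bound over the $\binom{\bar{N}}{d}$ possible support configurations together with an exchangeability/rejection-sampling argument, is performed once and for all in \cite{Calafiore_Campi_2006} and need not be repeated here.
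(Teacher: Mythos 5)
Your proposal is correct and takes essentially the same route as the paper: the paper offers no standalone proof, asserting that Theorem \ref{thm:prob_feas_centr} follows by a direct application of the scenario approach theory of \cite{Calafiore_Campi_2006}, with $d$ any known upper bound on the support-set cardinality and with each sample $\delta^{(k)}$ inducing the single constraint $x \in \bigcap_{i=1}^m X_i(\delta^{(k)})$ --- exactly the checks and invocation you carry out. The only cosmetic difference concerns non-uniqueness of the minimizer: the paper notes that Algorithm \ref{alg:Alg1} itself, being deterministic, acts as the tie-break rule for the solution it returns, whereas you propose an external convex selection rule; either deterministic choice is admissible for the scenario theory.
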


In words, Theorem \ref{thm:prob_feas_centr} implies that with confidence at least $1-\beta$, $x_{\bar{N}}^*$ is feasible for $\mathcal{P}_\delta$ apart from a set of uncertainty instances with measure at most $\bar{\varepsilon}$. Notice that $\bar{\varepsilon}$ is in fact a function of $\bar{N}$, $\beta$ and $d$. We suppress this dependency though to simplify notation.
Note that even though $\mathcal{P}_{\bar{N}}$ does not necessarily have a unique solution, Theorem \ref{thm:prob_feas_centr} still holds for the solution returned by Algorithm \ref{alg:Alg1} (assuming convergence), since it is a deterministic algorithm and hence serves as a tie-break rule to select among the possibly multiple minimizers.

Following \cite{Campi_Garatti_2008}, \eqref{eq:eps_explicit_bar} could be replaced with an improved $\bar{\varepsilon}$, obtained as the solution of $\sum_{k=0}^{d-1} {\bar{N} \choose k} \bar{\varepsilon}^k \big ( 1- \bar{\varepsilon} \big )^{\bar{N} - k} = \beta$. However, we use \eqref{eq:eps_explicit_bar} since it gives an explicit relation expression for $\bar{\varepsilon}$, and also renders \eqref{eq:prob_all_cons_bar} directly comparable with the results provided in the next subsection.

In case $\bar{\varepsilon}$ exceeds one, the result becomes trivial. However, note that Theorem \ref{thm:prob_feas_centr} can be also reversed (as in experiment design) to compute the number $\bar{N}$ of scenarios that is required for \eqref{eq:prob_all_cons_bar} to hold for given $\bar{\varepsilon}, \beta \in (0,1)$. This can be determined by solving \eqref{eq:eps_explicit_bar} with respect to $\bar{N}$ with the chosen $\bar{\varepsilon}$ fixed (e.g., using numerical inversion). The reader is referred to Theorem 1 of \cite{Calafiore_Campi_2006} for an explicit expression of $\bar{N}$.

\subsection{Probabilistic feasibility - Scenarios as a private resource} \label{sec:SecVC}
We now consider the case where the information carried by the scenarios is distributed, that is, each agent has its own set of scenarios, which constitute agents' private information.
\sg{Specifically,} assume that each agent $i$, $i=1,\ldots,m$, is provided with a set $S_i = \{\delta_i^{(1)}, \ldots, \delta_i^{(N_i)} \} \subset \Delta$ of $N_i \in \mathbb{N}_+$ i.i.d. scenarios of $\delta$, extracted according to the underlying probability measure $\mathbb{P}$. Here, $\delta_i^{(j)}$ denotes scenario $j$ of agent $i$, $j=1,\ldots,N_i$, $i=1,\ldots,m$. The scenarios across the different sets $S_i$, $i=1,\ldots,m$, are independent from each other. The total number of scenarios is $N = \sum_{i=1}^m N_i$. Consider then the following optimization program $\mathcal{P}_N$, where each agent has its own scenario set.
\begin{align}
\mathcal{P}_{N}:~& \min_{x \in \mathbb{R}^n} \sum_{i=1}^m f_i(x) \nonumber \\
& \text{subject to } x \in \bigcap_{i=1}^m \bigcap_{\delta \in S_i} X_i(\delta). \label{eq:P_con_N}
\end{align}
Program $\mathcal{P}_{N}$ can be solved via the distributed algorithm of Section \ref{sec:SecIIA}, so that a solution is obtained without exchanging any private information regarding the scenarios.
In fact, one can apply Algorithm \ref{alg:Alg1} with $\bigcap_{\delta \in S_i} X_i(\delta)$ in place of $X_i$, for all $i=1,\ldots,m$.

Similarly to Corollary \ref{cor:optimality_N_bar}, letting $X_N^* \subseteq \bigcap_{i=1}^m \bigcap_{\delta \in S_i} X_i(\delta)$ be the set of minimizers of $\mathcal{P}_N$, we have the following corollary of Theorem \ref{thm:optimality}.

\begin{cor} \label{cor:optimality_N}
Consider Assumptions \ref{ass:Convex}-\ref{ass:Weights} with the modifications stated in Section \ref{sec:SecV}, and Algorithm \ref{alg:Alg1}. We have that, for some $x_N^* \in X_N^*$,
\begin{align}
\lim_{k \rightarrow \infty} \|x_{i,N}(k) - x_N^*\| = 0, \text{ for all } i=1,\ldots,m, \label{eq:optimality_N}
\end{align}
where $x_{i,N}(k)$ denotes the solution generated at iteration $k$, step 8 of Algorithm \ref{alg:Alg1}, when $X_i$ is replaced by $\bigcap_{\delta \in S_i} X_i(\delta)$.
\end{cor}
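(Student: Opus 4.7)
The plan is to observe that Corollary \ref{cor:optimality_N} reduces to a direct application of Theorem \ref{thm:optimality}. To make this rigorous, I would verify that the problem $\mathcal{P}_N$, when recast into the deterministic template of $\mathcal{P}$ via the substitution $\widetilde{X}_i := \bigcap_{\delta \in S_i} X_i(\delta)$ for $i=1,\ldots,m$, satisfies Assumptions \ref{ass:Convex}, \ref{ass:CompactLip} and \ref{ass:SlaterPoint}. Assumptions \ref{ass:Network}--\ref{ass:Weights} concern only the network and the algorithm parameters $a_j^i(k)$ and $c(k)$, so they carry over verbatim.

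First I would check convexity: the sets $\widetilde{X}_i$ are finite intersections of the convex sets $X_i(\delta)$ (by the first modification stated at the beginning of Section \ref{sec:SecV}), hence convex, and the $f_i(\cdot)$ are convex by Assumption \ref{ass:Convex}. This gives Assumption \ref{ass:Convex} for the modified problem. Next I would check compactness: since $S_i$ is a finite set of scenarios, the second modification in Section \ref{sec:SecV} directly yields that $\widetilde{X}_i = \bigcap_{\delta \in S_i} X_i(\delta)$ is compact for each $i$, so Assumption \ref{ass:CompactLip} holds.

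The step requiring a little more care is the interior point condition (Assumption \ref{ass:SlaterPoint}) applied to $\bigcap_{i=1}^m \widetilde{X}_i$. The third modification in Section \ref{sec:SecV} is stated for a common scenario set $S$, whereas here each agent uses its own $S_i$. The trick is to take $S := \bigcup_{i=1}^m S_i$, which is a finite subset of $\Delta$ as a finite union of finite sets. By the third modification, $\bigcap_{i=1}^m \bigcap_{\delta \in S} X_i(\delta)$ has a non-empty interior, and since $S_i \subseteq S$ we have
\begin{align*}
\bigcap_{i=1}^m \bigcap_{\delta \in S} X_i(\delta) \subseteq \bigcap_{i=1}^m \bigcap_{\delta \in S_i} X_i(\delta) = \bigcap_{i=1}^m \widetilde{X}_i,
\end{align*}
so the latter also has non-empty interior. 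This is the only spot where the per-agent nature of the scenarios matters, and the inclusion above dissolves it.

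Having verified all hypotheses, I would invoke Theorem \ref{thm:optimality} with $X_i$ replaced by $\widetilde{X}_i$. Since Algorithm \ref{alg:Alg1} executed with these modified constraint sets produces exactly the iterates $x_{i,N}(k)$ by the construction in the statement of the corollary, and since $X_N^*$ is the set of minimizers of $\mathcal{P}_N$ (which plays the role of $X^*$ under the substitution), the conclusion $\lim_{k\to\infty}\|x_{i,N}(k)-x_N^*\|=0$ for all $i=1,\ldots,m$ and some $x_N^* \in X_N^*$ follows immediately. No new technical obstacle arises; the proof is essentially bookkeeping to ensure the assumptions transfer, with the interior point verification being the only non-trivial step.
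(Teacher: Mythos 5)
Your proposal is correct and matches the paper's own treatment: the paper gives no separate proof of Corollary \ref{cor:optimality_N}, presenting it as a direct application of Theorem \ref{thm:optimality} with $\bigcap_{\delta \in S_i} X_i(\delta)$ in place of $X_i$, the modifications 1)--3) of Section \ref{sec:SecV} being exactly what makes Assumptions \ref{ass:Convex}, \ref{ass:CompactLip} and \ref{ass:SlaterPoint} carry over. Your only added step --- invoking modification 3) with $S = \bigcup_{i=1}^m S_i$ and using the inclusion $\bigcap_{i=1}^m \bigcap_{\delta \in S} X_i(\delta) \subseteq \bigcap_{i=1}^m \bigcap_{\delta \in S_i} X_i(\delta)$ to get the interior point condition for the per-agent sets --- is a valid and appropriately careful reading of the assumption as stated.
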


As in Section \ref{sec:SecVB}, we show that the minimizer $x_{N}^*$ of $\mathcal{P}_{N}$ to which our iterative scheme converges according to Corollary \ref{cor:optimality_N} is feasible in a probabilistic sense for $\mathcal{P}_\delta$. Here, a difficulty arises, since we seek to quantify the probability that $x_N^*$ satisfies the global constraint $\bigcap_{i=1}^m X_i(\delta)$, where $\delta$ is a common parameter to all $X_i(\delta)$, $i=1,\ldots,m$, while $x_N^*$ has been computed considering $X_i(\delta)$ for uncertainty scenarios that are independent from those of $X_j(\delta)$, $j \neq i$, $i=1,\ldots,m$.

Let $S = \{S_i\}_{i=1}^m$ be a collection of the scenarios of all agents. Similarly to the previous case, we denote by $d \in \mathbb{N}_+$ a known upper-bound for the cardinality of the support set of $\mathcal{P}_N$. However, the way the constraints of \sg{the support} set are split among the agents depends on the specific $S$ employed. Therefore, for each set of scenarios $S$ \sg{and for $i=1,\ldots,m$}, denote by $d_{i,N} ( S ) \in \mathbb{N}$ (possibly equal to zero) the number of constraints that belong to \sg{both} the support set of $\mathcal{P}_N$ and $S_i$, i.e., the constraints of agent $i$. We then have that $\sum_{i=1}^m d_{i,N} ( S ) \leq d$, for any $S \in \Delta^N$. For short we will write $d_{i,N}$ instead of $d_{i,N} (S)$ and make the dependency on $S$ explicit only when necessary.

\subsubsection{A naive result} \label{sec:SecVB1}
For any collection of agents' scenarios, it clearly holds that $d_{i,N} \leq d$ for all $i=1,\ldots,m$, for any scenario set.
Thus, for each $i=1,\ldots,m$, Theorem \ref{thm:prob_feas_centr} can be applied conditionally to the scenarios of all other agents to obtain a local, in the sense that it holds only for the constraints of agent $i$, feasibility characterization. Fix $\beta_i \in (0,1)$ and let
\begin{align}
\widetilde{\varepsilon}_i = 1 - \sqrt[N_i-d]{\frac{\beta_i}{{N_i \choose d}}}. \label{eq:eps_explicit_tilde}
\end{align}
We then have that
\begin{align}
&\mathbb{P}^{N} \Big \{ S \in \Delta^{N} :~ \mathbb{P} \Big \{ \delta \in \Delta :~ x_{N}^* \notin X_i(\delta) \Big \}  \leq \widetilde{\varepsilon}_i \Big \} \geq 1-\beta_i. \label{eq:prob_agent_i}
\end{align}

By the subadditivity of $\mathbb{P}^N$ and $\mathbb{P}$, \eqref{eq:prob_agent_i} can be used to quantify the probabilistic feasibility of
$x_N^*$ with respect to the global constraint $\bigcap_{i=1}^m X_i(\delta)$. Following the proof of Corollary 1 in \cite{Kariotoglou_etal_2015}, where a similar argument is provided, we have that
\begin{align}
&\mathbb{P}^{N} \Big \{ S \in \Delta^{N} :~ \mathbb{P} \Big \{ \delta \in \Delta :~ x_{N}^* \notin \bigcap_{i=1}^m X_i(\delta) \Big \}  \leq \sum_{i=1}^m \widetilde{\varepsilon}_i \Big \} \nonumber \\
&= \mathbb{P}^{N} \Big \{ S \in \Delta^{N} :~ 
\mathbb{P} \Big \{ \delta \in \Delta :~ \exists i \in \{1,\ldots,m\}, x_{N}^* \notin X_i(\delta) \Big \}  \nonumber \\
&~~~~ ~~~~ \leq \sum_{i=1}^m \widetilde{\varepsilon}_i \Big \} \nonumber \\
&= \mathbb{P}^{N} \Big \{ S \in \Delta^{N} :~ \mathbb{P} \Big \{ \bigcup_{i=1}^m \Big \{ \delta \in \Delta :~ x_{N}^* \notin X_i(\delta) \Big \}  \Big \}  \leq \sum_{i=1}^m \widetilde{\varepsilon}_i \Big \} \nonumber \\
&\geq \mathbb{P}^{N} \Big \{ S \in \Delta^{N} :~ \sum_{i=1}^m \mathbb{P} \Big \{ \delta \in \Delta :~ x_{N}^* \notin X_i(\delta) \Big \}  \leq \sum_{i=1}^m \widetilde{\varepsilon}_i \Big \} \nonumber \\
&\geq \mathbb{P}^{N} \Big \{ \bigcap_{i=1}^m \Big \{ S \in \Delta^{N} :~ \mathbb{P} \Big \{ \delta \in \Delta :~ x_{N}^* \notin X_i(\delta) \Big \}  \leq \widetilde{\varepsilon}_i \Big \} \Big \} \nonumber \\
&\geq 1 - \sum_{i=1}^m \mathbb{P}^{N} \Big \{ S \in \Delta^{N} :~ \mathbb{P} \Big \{ \delta \in \Delta :~ x_{N}^* \notin X_i(\delta) \Big \}  > \widetilde{\varepsilon}_i \Big \} \nonumber \\
&\geq 1 - \sum_{i=1}^m \beta_i,
\label{eq:prob_agent_sub}
\end{align}
which leads to the following proposition.

\begin{proposition} \label{prob:prob_feas_sub}
Fix $\beta \in (0,1)$ and choose $\beta_i$, $i=1,\ldots,m$, such that $\sum_{i=1}^m \beta_i = \beta$. For each $i=1,\ldots,m$, let $\widetilde{\varepsilon}_i$ be as in \eqref{eq:eps_explicit_tilde} and set $\widetilde{\varepsilon} = \sum_{i=1}^m \widetilde{\varepsilon}_i$. We then have that
\begin{align}
&\mathbb{P}^N \Big \{ S \in \Delta^N :~ \mathbb{P} \Big \{ \delta \in \Delta :~ x_{N}^* \notin \bigcap_{i=1}^m X_i(\delta) \Big \} \leq \widetilde{\varepsilon} \Big \} \geq 1-\beta. \label{eq:prob_all_cons}
\end{align}
\end{proposition}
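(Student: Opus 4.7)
The plan is to derive the bound by applying Theorem \ref{thm:prob_feas_centr} separately to each agent's constraint and then combining the resulting per-agent statements with two union bounds, one in the uncertainty space $\Delta$ and one in the scenario space $\Delta^N$. This is exactly the chain of inequalities already displayed in \eqref{eq:prob_agent_sub}, so the proof amounts to organising those steps and checking that each is rigorously justified.

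First, I would fix $i \in \{1,\ldots,m\}$ and condition on the scenario sets $S_j$ for $j \neq i$. With those realisations regarded as given, the program $\mathcal{P}_N$ reduces to an optimisation problem whose random constraints are indexed by $S_i$ only, while the remaining constraints contribute to the deterministic problem structure. Since the constant $d$ is by hypothesis a uniform upper bound on the cardinality of the support set of $\mathcal{P}_N$ for every realisation of $S$, it is in particular a valid upper bound on the cardinality of the support set of this reduced, conditional program. Applying Theorem \ref{thm:prob_feas_centr} to the reduced program with $N_i$ scenarios, confidence $\beta_i$, and support-set bound $d$ yields exactly \eqref{eq:prob_agent_i} conditionally on $\{S_j\}_{j\neq i}$; taking expectation over $\{S_j\}_{j\neq i}$ preserves the inequality by the tower property of $\mathbb{P}^N$.

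Next, I would exploit the identity $\{\delta : x_N^* \notin \bigcap_{i=1}^m X_i(\delta)\} = \bigcup_{i=1}^m \{\delta : x_N^* \notin X_i(\delta)\}$ together with subadditivity of $\mathbb{P}$, so that the global violation probability is bounded by $\sum_{i=1}^m \mathbb{P}\{\delta : x_N^* \notin X_i(\delta)\}$. Consequently, the event in \eqref{eq:prob_all_cons} contains the event $\bigcap_{i=1}^m \{S \in \Delta^N : \mathbb{P}\{\delta : x_N^* \notin X_i(\delta)\} \leq \widetilde{\varepsilon}_i\}$. A second union bound, now applied in the scenario space $\Delta^N$, together with \eqref{eq:prob_agent_i}, gives $\mathbb{P}^N$ of the complement at most $\sum_{i=1}^m \beta_i = \beta$, so the intersection has $\mathbb{P}^N$-measure at least $1-\beta$. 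Concatenating the inclusions yields \eqref{eq:prob_all_cons}.

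The delicate point — and the only place where care is really needed — is the conditional application of Theorem \ref{thm:prob_feas_centr} in the first step: one must recognise that the per-agent feasibility bound with the same constant $d$ survives the conditioning, because $d$ bounds the support-set cardinality uniformly in $S$. Once this is established, the two union bounds in the uncertainty space and in the scenario space are routine, and the rest of the argument is simply the bookkeeping carried out in \eqref{eq:prob_agent_sub}.
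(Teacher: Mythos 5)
Your proof is correct and takes essentially the same route as the paper: the per-agent bound \eqref{eq:prob_agent_i}, obtained by applying Theorem~\ref{thm:prob_feas_centr} conditionally on the other agents' scenarios (valid because $d_{i,N}\leq d$ uniformly in $S$), followed by the two subadditivity (union-bound) steps in the uncertainty and scenario spaces exactly as in the chain \eqref{eq:prob_agent_sub}. Your explicit justification of why the conditional application with the same bound $d$ survives integration over $\{S_j\}_{j\neq i}$ is a point the paper asserts without elaboration, but the argument is the same.
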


Proposition \ref{prob:prob_feas_sub} implies that with confidence at least $1-\beta$, $x_N^*$ is feasible for $\mathcal{P}_\delta$ apart from a set with measure at most $\widetilde{\varepsilon}$. This result, however, tends to be very conservative thus prohibiting its applicability to problems with a high number of agents. This can be seen by comparing $\widetilde{\varepsilon}$ with $\bar{\varepsilon}$, where the latter corresponds to the case where scenarios are treated as a common resource. To this end, consider the particular set-up where $N_i = \bar{N}$ and $\beta_i = \beta/m$, for all $i=1,\ldots,m$. By \eqref{eq:eps_explicit_bar} and \eqref{eq:eps_explicit_tilde}, it follows that $\widetilde{\varepsilon} = m \widetilde{\varepsilon}_i \approx m \bar{\varepsilon}$, thus growing approximately (we do not have exact equality since $\beta_i = \beta/m$) linearly with the number of agents. This can be also observed in the numerical comparison of Section \ref{sec:SecVB2} (see Fig. \ref{fig:guarantees}).
The issue with Proposition \ref{prob:prob_feas_sub} is that it accounts for a worst-case setting, where $d_{i,N} = d$ for all $i=1,\ldots,m$; however, this can not occur, since $\sum_{i=1}^m d_{i,N} \leq d$ implies that if $d_{i,N} = d$ for some $i$, then $d_{j,N} = 0$, for all $j \neq i$, $i=1,\ldots,m$.

\subsubsection{A tighter result} \label{sec:SecVB2}
To alleviate the conservatism of Proposition \ref{prob:prob_feas_sub}, and exploit the fact that $\sum_{i=1}^m d_{i,N} \leq d$, we use the recent results of \cite{new_scenario_paper_2015}.

For each $i=1,\ldots,m$, fix $\beta_i \in (0,1)$ and consider a function $\varepsilon_i(\cdot)$ defined as follows:
\begin{align}
\varepsilon_i(k) = 1 - \sqrt[N_i-k]{\frac{\beta_i}{(d+1) {N_i \choose k}}}, \text{ for all } k=0,\ldots,d. \label{eq:eps_explicit}
\end{align}
Notice that $\varepsilon_i(\cdot)$ is also a function of $N_i$, $\beta_i$ and $d$, but this dependency is suppressed to simplify notation.
For each $i=1,\ldots,m$, working conditionally with respect to the scenarios $S \setminus S_i$ of all other agents, Theorem 1 of \cite{new_scenario_paper_2015} entails that
\begin{align}
&\mathbb{P}^N \Big \{ S \in \Delta^N :~ \mathbb{P} \Big \{ \delta \in \Delta :~ x_N^* \notin X_i(\delta) \Big \} \leq \varepsilon_i (d_{i,N} ) \nonumber \\
& ~~~~~~~~~~~~~~~~~\Big |~ \big \{ S \setminus S_i \in \Delta^{N-N_i} \big \} \Big \}  \geq 1-\beta_i. \label{eq:prob_agent_cond}
\end{align}
Integrating \eqref{eq:prob_agent_cond} with respect to the probability of realizing the scenarios $S \setminus S_i$, we have that
\begin{align}
&\mathbb{P}^N \Big \{ S \in \Delta^N :~\mathbb{P} \Big \{ \delta \in \Delta :~ x_N^* \notin X_i(\delta) \Big \} \leq \varepsilon_i (d_{i,N}) \Big \} \nonumber \\
& ~~~~~\geq 1-\beta_i. \label{eq:prob_agent}
\end{align}
The statement in \eqref{eq:prob_agent} implies that for each agent $i=1,\ldots,m$, with confidence at least $1-\beta_i$, the probability that $x_N^*$ does not belong to the constraint set $X_i(\delta)$ of agent $i$ is at most equal to $\varepsilon_i (d_{i,N})$.

Note, however, that \eqref{eq:prob_agent} is very different from \eqref{eq:prob_agent_i}, which is obtained by means of the basic scenario approach theory, since $d_{i,N}$ is not known a-priori but depends on the extracted scenarios.
Using \eqref{eq:prob_agent} in place of \eqref{eq:prob_agent_i} in the the derivations of \eqref{eq:prob_agent_sub}, by the subadditivity of $\mathbb{P}^N$ and $\mathbb{P}$, we have that
\begin{align}
&\mathbb{P}^N \Big \{ S \in \Delta^N :~ \mathbb{P} \Big \{ \delta \in \Delta :~ x_N^* \notin \bigcap_{i=1}^m X_i(\delta) \Big \} \nonumber \\
&~~~~~\leq \sum_{i=1}^m \varepsilon_i (d_{i,N}) \Big \} \geq 1 - \sum_{i=1}^m \beta_i. \label{eq:prob_all_sub_new}
\end{align}
Unlike \eqref{eq:prob_all_cons_bar} and \eqref{eq:prob_all_cons}, \eqref{eq:prob_all_sub_new} is an a-posteriori statement due to the dependency of $\varepsilon_i (d_{i,N})$ on the extracted scenarios.
However, the sought a-priori result can be obtained by considering the worst-case value for $\sum_{i=1}^m \varepsilon_i (d_{i,N})$, with respect to the different combinations of $d_{i,N}$, $i=1,\ldots,m$, satisfying $\sum_{i=1}^m d_{i,N} \leq d$.
This can be achieved by means of the following maximization problem:
\begin{align}
\varepsilon = &\max_{\{d_i \in \mathbb{N}_+ \}_{i=1}^m} \sum_{i=1}^m \varepsilon_i(d_i) \label{eq:opt_eps1}\\
&\text{subject to } \sum_{i=1}^m d_i \leq d, \nonumber
\end{align}
Problem \eqref{eq:opt_eps1} is an integer optimization program. It can be solved numerically to obtain $\varepsilon$.
The optimal value $\varepsilon$ of the problem above depends on $\{N_i,\beta_i\}_{i=1}^m$ and $d$, but this dependency is suppressed to simplify notation. Notice the slight abuse of notation, since $\{d_i\}_{i=1}^m$ in \eqref{eq:opt_eps1} are integer decision variables and should not be related to $\{d_{i,N}\}_{i=1}^m$.
\sg{We have the following theorem which is the main achievement of this section.}

\begin{figure}[t]
\centering
\includegraphics[width = \columnwidth]{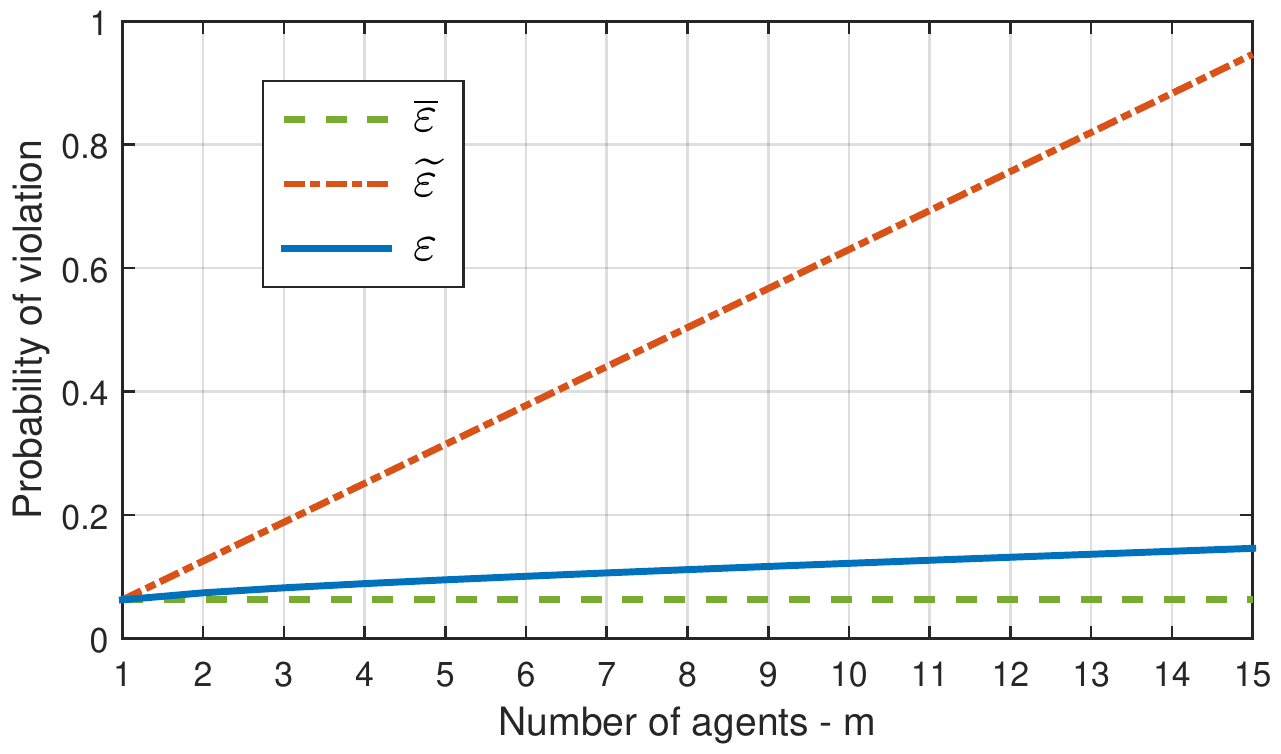}
\caption{Probability of constraint violation as a function of the number of agents, for the case where $d=50$, $\beta = 10^{-5}$, $N_i = \bar{N} = 4500$ and $\beta_i = \beta/m$, for all $i=1,\ldots,m$. The probability of violation $\bar{\varepsilon}$ (green dashed line) for the case of Section \ref{sec:SecVB} is independent of $m$, so it remains constant as the number of agents $m$ increases. For the case of Section \ref{sec:SecVB1}, $\widetilde{\varepsilon} \approx m \bar{\varepsilon}$ (red dotted-dashed line) for the considered set-up, so it grows approximately linearly with $m$. For the case of Section \ref{sec:SecVB2}, $\varepsilon$ (blue solid line) is moderately increasing with $m$, thus offering a less conservative result compared to the approach of Section \ref{sec:SecVB1}, while, in contrast to the approach of Section \ref{sec:SecVB}, it allows for distributed information about the scenarios.}\label{fig:guarantees}
\end{figure}

\begin{thm} \label{thm:prob_feas}
Fix $\beta \in (0,1)$ and choose $\beta_i$, $i=1,\ldots,m$, such that $\sum_{i=1}^m \beta_i = \beta$. Set $\varepsilon$ according to \eqref{eq:opt_eps1}. We then have that
\begin{align}
&\mathbb{P}^N \Big \{ S \in \Delta^N :~\mathbb{P} \Big \{ \delta \in \Delta :~ x_N^* \notin \bigcap_{i=1}^m X_i(\delta) \Big \} \leq \varepsilon \Big \} \geq 1-\beta. \label{eq:prob_all_new}
\end{align}
\end{thm}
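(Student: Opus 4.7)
The plan is to show that Theorem \ref{thm:prob_feas} follows almost immediately from the a-posteriori bound \eqref{eq:prob_all_sub_new} derived just above, by replacing the (random) quantity $\sum_{i=1}^m \varepsilon_i(d_{i,N})$ with its (deterministic) worst-case upper bound $\varepsilon$ defined in \eqref{eq:opt_eps1}. The only real ingredient needed is a pointwise inequality between these two quantities, which then propagates through a set-inclusion argument and hence transfers to probabilities.

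First, I would recall that for any realization of the scenario multiset $S\in\Delta^N$ the support-set decomposition satisfies $\sum_{i=1}^m d_{i,N}(S)\le d$, as already observed in the paragraph preceding the statement. This means that the integer tuple $\bigl(d_{1,N}(S),\ldots,d_{m,N}(S)\bigr)$ is a feasible point for the combinatorial maximization \eqref{eq:opt_eps1}. Consequently, by the very definition of $\varepsilon$ as the optimal value of that program,
\begin{align*}
\sum_{i=1}^m \varepsilon_i\bigl(d_{i,N}(S)\bigr) \;\le\; \varepsilon, \qquad \text{for every } S\in\Delta^N.
\end{align*}
(A minor caveat is that $d_{i,N}$ may be zero whereas the feasible set in \eqref{eq:opt_eps1} is written with $d_i\in\mathbb{N}_+$; I would either interpret this as $d_i\in\mathbb{N}$ or note that, since adding a zero component cannot decrease the objective when $\varepsilon_i(\cdot)\ge 0$, the bound is unaffected.)

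Next I would translate this pointwise inequality into an inclusion of events in $\Delta^N$. Fixing any $S$ for which $\mathbb{P}\{\delta:\, x_N^*\notin\bigcap_{i=1}^m X_i(\delta)\}\le \sum_{i=1}^m \varepsilon_i(d_{i,N}(S))$, the previous display immediately yields $\mathbb{P}\{\delta:\, x_N^*\notin\bigcap_{i=1}^m X_i(\delta)\}\le \varepsilon$. Therefore,
\begin{align*}
\Bigl\{ S:\, \mathbb{P}\{x_N^*\notin\cap_i X_i(\delta)\}\le \sum_{i=1}^m \varepsilon_i(d_{i,N})\Bigr\} \;\subseteq\; \Bigl\{ S:\, \mathbb{P}\{x_N^*\notin\cap_i X_i(\delta)\}\le \varepsilon\Bigr\}.
\end{align*}

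Finally, monotonicity of $\mathbb{P}^N$ combined with the a-posteriori bound \eqref{eq:prob_all_sub_new} gives $\mathbb{P}^N\{\cdot\le \varepsilon\}\ge 1-\sum_{i=1}^m \beta_i$, and using the choice $\sum_{i=1}^m\beta_i=\beta$ yields the target statement \eqref{eq:prob_all_new}. I do not foresee any real obstacle here, since the heavy lifting—namely the conditional scenario bound \eqref{eq:prob_agent_cond} from \cite{new_scenario_paper_2015} and the subadditivity/union-bound manipulation leading to \eqref{eq:prob_all_sub_new}—has already been carried out; the present theorem is the a-priori reformulation obtained by taking a worst-case over the admissible support-set splits.
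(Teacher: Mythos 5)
Your proposal is correct and follows essentially the same route as the paper's own proof: for any realization $S$ the tuple $\{d_{i,N}(S)\}_{i=1}^m$ is feasible for \eqref{eq:opt_eps1}, hence $\sum_{i=1}^m \varepsilon_i(d_{i,N}(S)) \leq \varepsilon$ pointwise in $S$, and this is combined with the a-posteriori bound \eqref{eq:prob_all_sub_new} to obtain \eqref{eq:prob_all_new}. Your caveat about $d_i \in \mathbb{N}_+$ in \eqref{eq:opt_eps1} versus $d_{i,N}$ possibly being zero correctly flags a notational slip in the paper that, as you note, does not affect the argument.
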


\begin{proof}
Fix $\beta \in (0,1)$ and choose $\beta_i$, $i=1,\ldots,m$, such that $\sum_{i=1}^m \beta_i = \beta$.
Consider any set $S$ of scenarios and notice that $\sum_{i=1}^m d_{i,N} (S) \leq d$. This implies that $\{d_{i,N} (S)\}_{i=1}^m$ constitute a feasible solution of \eqref{eq:opt_eps1}.
Due to the fact that $\varepsilon$ is the optimal value of \eqref{eq:opt_eps1}, $\sum_{i=1}^m \varepsilon_i( d_{i,N} (S) ) \leq \varepsilon$ for any $S$, which together with \eqref{eq:prob_all_sub_new}, leads to \eqref{eq:prob_all_new} and hence concludes the proof.
\end{proof}

The result of Theorem \ref{thm:prob_feas} can be significantly less conservative compared to that of Proposition \ref{prob:prob_feas_sub}, since we explicitly account for the fact that $\sum_{i=1}^m d_{i,N} \leq d$ in the maximization problem in \eqref{eq:opt_eps1}. This can be also observed by means of the numerical example of Fig. \ref{fig:guarantees}, where we investigate how $\bar{\varepsilon}$, $\widetilde{\varepsilon}$ and $\varepsilon$ change as a function of the number of agents $m$. We consider a particular case where $d=50$, $\beta = 10^{-5}$, $N_i = \bar{N} = 4500$ and $\beta_i = \beta/m$, for all $i=1,\ldots,m$. For this set-up, where $\beta$ is split evenly among agents and all agents have the same number of scenarios, it turned out that the maximum value $\varepsilon$ in \eqref{eq:opt_eps1} is achieved for $d_i = d/m$, $i=1,\ldots,m$. As it can be seen from Fig. \ref{fig:guarantees}, $\bar{\varepsilon}$ (green dashed line) for the case of Section \ref{sec:SecVB} is independent of $m$, so it remains constant as the number of agents $m$ increases. For the case of Section \ref{sec:SecVB1}, \sg{$\widetilde{\varepsilon}$ (red dotted-dashed line) rows approximately linearly with $m$ (see also discussion at the end of Section \ref{sec:SecVB1})}. For the case of Section \ref{sec:SecVB2}, $\varepsilon$ (blue solid line) is moderately increasing with $m$, thus offering a less conservative result compared to the approach of Section \ref{sec:SecVB1}, while, in contrast to the approach of Section \ref{sec:SecVB}, it allows for distributed information about the uncertainty scenarios.

In certain cases (e.g., when the number of agents is high), $\varepsilon$ may still exceed one and hence the result of Theorem \ref{thm:prob_feas} becomes trivial (the same for Proposition \ref{prob:prob_feas_sub} in such cases). Similarly to the discussion at the end of Section \ref{sec:SecVB}, Theorem \ref{thm:prob_feas} can be reversed to compute the number of scenarios $N_i$ that need to be extracted by agent $i$, $i=1,\ldots,m$, for a given value of $\varepsilon, \beta \in (0,1)$. This can be achieved by numerically seeking for values of $N_i$, $i=1,\ldots,m$, that lead to a solution of \eqref{eq:opt_eps1} that attains the desired $\varepsilon$.

\section{Numerical example} \label{sec:exam}

We address a multi-agent regression problem subject to $L_1$-regularization, which is inspired by Example 1 of \cite{Campi_Care_2013}. Specifically, we consider $m$ functions $s_i(\delta)$, $i=1,\ldots,m$, which can, for instance, represent the effect of the same phenomenon at different locations of $m$ agents. The functions are unknown, and each agent $i$ has access to a (private) data set $\{(\delta_i^{(j)},s_i(\delta_i^{(j)}),\ j=1,\ldots,N_i\}$ of measurements of function  $s_i(\delta)$ only.

The agents seek to determine the magnitude of $d$ co-sinusoids at given frequencies, so that their superposition provides a central approximation of all the $s_i(\cdot)$, $i=1,\ldots,m$. To this end, letting $x = [x^{[1]},\ldots,x^{[d]},x^{[d+1]}] \in \mathbb{R}^{d+1}$, the following program is considered:
\begin{align}
\min_{x \in X \subset \mathbb{R}^{d+1}} & x^{[d+1]} + \lambda \|x\|_1 \label{eq:reg} \\
\text{subject to }& \Big |\sum_{\ell=1}^d x^{[\ell]} \cos(\ell \delta_i^{(j)}) - s_i(\delta_i^{(j)}) \Big| \leq x^{[d+1]}, \nonumber \\
&\text{for all } j=1,\ldots,N_i, \text{ for all } i=1,\ldots,m.\nonumber
\end{align}
In \eqref{eq:reg}, one minimizes  $x^{[d+1]}$, which is the worst-case approximation error over the data-points of all agents, plus a regularization term $\lambda \|x\|_1$, which induces sparsity in the solution. The set $X$ is  a hyper-rectangular with high enough edge length so that the solution remains the same as in the unconstrained case, and it is introduced to ensure compactness so that Algorithm \ref{alg:Alg1} can be applied (in fact this set could be different per agent, and does not need to be agreed upfront).
By setting $f_i(x) = (1/m) (x^{[d+1]} + \lambda \|x\|_1)$, $X_i(\delta) = \{x \in X:~\Big |\sum_{\ell=1}^d x^{[\ell]} \cos(\ell \delta) - s_i(\delta) \Big| \leq x^{[d+1]} \}$, and $S_i = \{\delta_i^{(1)},\ldots,\delta_i^{(N_i)}\}$, $i=1,\ldots,m$, it is seen that problem \eqref{eq:reg} is in the form of $\mathcal{P}_N$, and, moreover, it satisfies the assumptions of Corollary \ref{cor:optimality_N}. Hence, the distributed Algorithm \ref{alg:Alg1} can be employed to compute the optimal solution of \eqref{eq:reg}. 
Notice that $x_i$ in Algorithm \ref{alg:Alg1} corresponds to a copy of $x$ maintained by agent $i$ and should not be confused with $x^{[\ell]}$, which is the $\ell$-th component of $x$. Each objective function $f_i(x)$ is non-differentiable.
In our simulation, we considered $m=6$ agents on a ring of alternating communicating pairs (time-varying communication graph), and assigned at each step the same weight to both the local solution and that transmitted by the active neighbor. Moreover, we set $n=d+1=51$, $\lambda = 0.001$, and $N_i = N = 4500$ for all $i=1,\ldots,m$. All samples $\delta_i^{(j)}$, $i=1,\ldots,m$, $j=1,\ldots,N$, were independently drawn from a uniform distribution with support $[-\pi,\pi]$, while, mimicking \cite{Campi_Care_2013}, each $s_i(\delta_i^{(j)})$ was obtained by evaluating the sum of a certain number of randomly shifted co-sinusoids. Finally, Algorithm \ref{alg:Alg1} was initialized with the solutions satisfying the local constraints only and $c(k) = 0.05/(k+1)$.
Figure \ref{fig:regr} shows the data points for each agent (grey dots) and the functions $\sum_{\ell=1}^d x^{[\ell]}_i \sin(\ell \delta)$ corresponding to the agents' solutions returned by Algorithm \ref{alg:Alg1} (a) at the initialization and (b) after $150$ iterations. As it appears, in conformity to Corollary \ref{cor:optimality_N}, all local solutions converge to a unique solution. The fact that this solution is also optimal can be experimentally  inspected from Figure \ref{fig:iter}, where the objective values corresponding to the agent local solutions as iterations progress are displayed against the optimal objective value of problem \eqref{eq:reg} computed via a centralized algorithm for comparison purposes. The value to which $x^{[d+1]}_i$ converged was $0.88$.
\begin{figure}[t]
\centering
\includegraphics[width = .49\columnwidth]{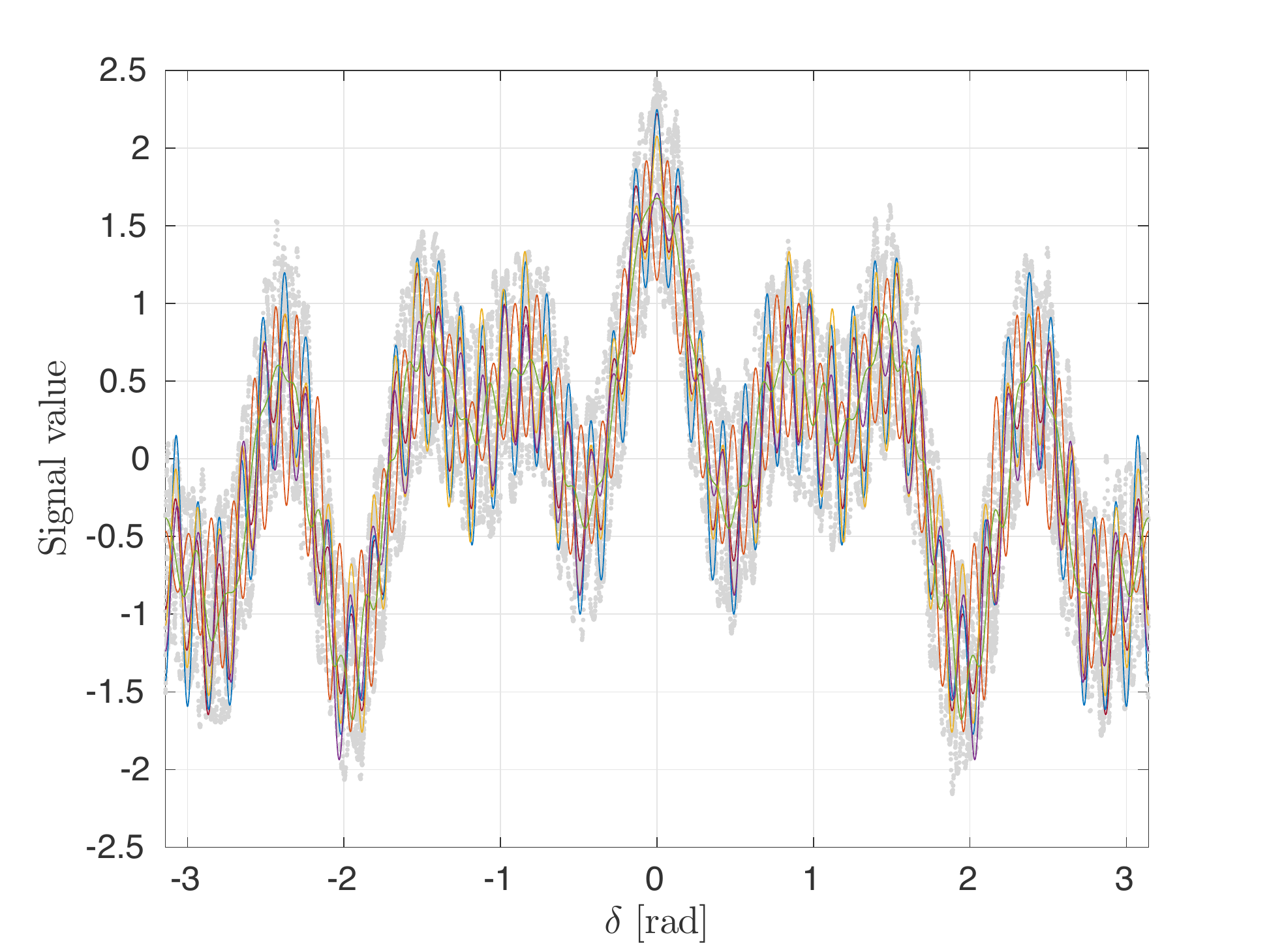}
\includegraphics[width = .49\columnwidth]{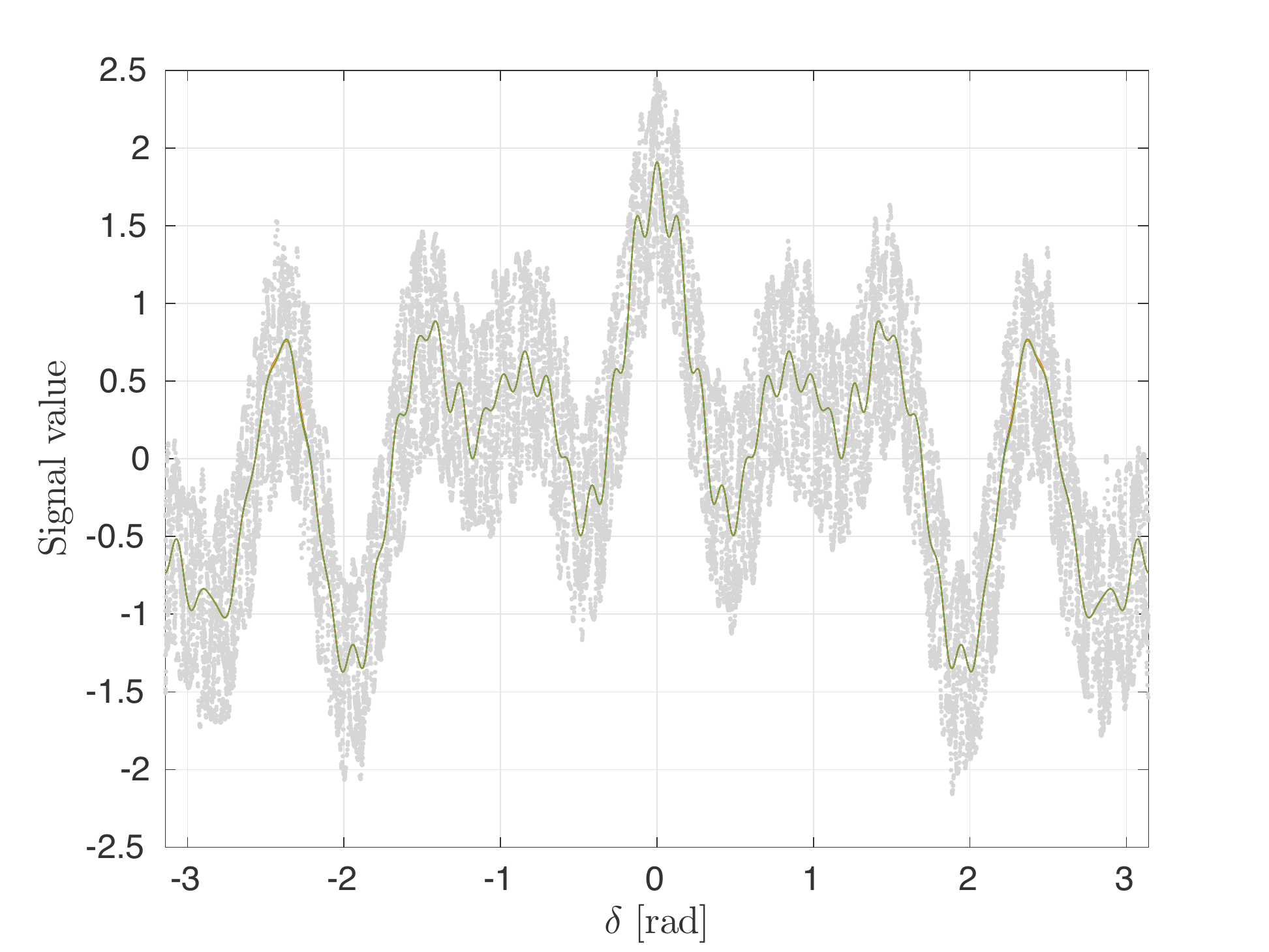}
\quad \quad (a) \hspace{3.7cm} (b)
\caption{Data points (grey crosses), and the functions (solid lines) corresponding to the local solutions returned by Algorithm \ref{alg:Alg1} (a) at the initialization and (b) after $150$ iterations.}\label{fig:regr}
\end{figure}
\begin{figure}[t]
\centering
\includegraphics[width = \columnwidth]{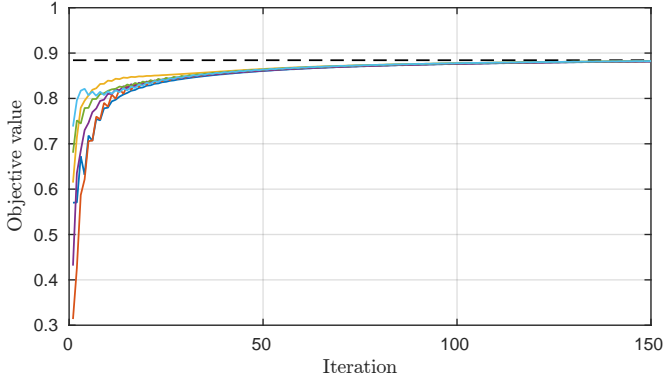}
\caption{Objective values corresponding to the agent local solutions as iterations progress (solid lines) vs. optimal value of problem \eqref{eq:reg} computed via a centralized algorithm (dashed line)}\label{fig:iter}
\end{figure}
In our simulations, scenarios were treated as private resources as each agent's scenarios are independent of the scenarios of other agents. Nonetheless, for a newly seen observation $\delta$, one may be interested in assessing the joint-constraint violation probability $\mathbb{P} \Big \{ \delta \in \Delta :~ x^*_N \notin X_i(\delta) \Big \}$, which in the present example corresponds to the probability of being apart from the obtained central function $\sum_{\ell=1}^d x^{[\ell]}_i \sin(\ell \delta)$ more than $0.88$ for at least one of the function $s_i(\delta)$, $i=1,\ldots,m$. Using $80000$ new scenarios (different from those used in the optimization process), this probability was empirically estimated as $0.01$. Using $\beta = 10^{-5}$ and $d=50$ (the bound on the dimension of the support set is $d = 50$ and not $d+1$, since we do not need to account for the epigraphic variable $x^{[d+1]}$, see \cite{Schildbach_etal_2013}), Proposition \ref{prob:prob_feas_sub} and Theorem \ref{thm:prob_feas} give $\widetilde{\varepsilon} = 0.37$ and $\varepsilon = 0.097$, respectively. As it can be seen, the novel bound of Theorem \ref{thm:prob_feas} provides a much tighter guaranteed upper bound for the probability of joint-constraint violation compared to $\widetilde{\varepsilon}$, while not requiring agents to have access to the same set of scenarios. Other runs of the example, with new observations extracted, always gave an estimate of the joint-constraint violation probability smaller than $0.09$, as it was expected given the high-confidence $1-10^{-5}$ with which the bound is guaranteed.

\section{Convergence analysis and proof of Theorem \ref{thm:optimality}} \label{sec:SecIII}
\subsection{Preparatory results} \label{sec:SecIIIAA}
We establish several relations between the difference of the agent estimates from certain average quantities.
At the end of this subsection we provide a summability result that is fundamental for the proof of Theorem \ref{thm:optimality} in subsection \ref{sec:SecIV}.

Let
\begin{align}
v(k) = \frac{1}{m} \sum_{i=1}^m x_i(k), \text{ for all } k \geq 0. \label{eq:v_k}
\end{align}
\sg{By using} Assumption \ref{ass:Convex}, the fact that the sets $X_i$, $i=1,\ldots,m$ are closed \sg{thanks to} Assumption \ref{ass:CompactLip}, and Assumption \ref{ass:SlaterPoint}, it is shown in Lemma 2 of \cite{Nedic_etal_2010} that
\begin{align}
\bar{v}(k) = \frac{\epsilon(k)}{\epsilon(k) + \rho} \bar{x} + \frac{\rho}{\epsilon(k) + \rho} v(k) &\in \bigcap_{i=1}^m X_i, \nonumber \\
&\text{ for all } k \geq 0, \label{eq:bar_v_k}
\end{align}
where $\epsilon(k) = \sum_{i=1}^m \dist(v(k),X_i)$, and $\bar{x} \in \mathbb{R}^n$, $\rho \in \mathbb{R}_+$ are as in Assumption \ref{ass:SlaterPoint}.
Note that unlike $x_i(k)$ and $v(k)$, which do not necessarily belong to $\bigcap_{i=1}^m X_i$, for $\bar{v}(k)$ this is always the case, thus providing a feasible solution of $\mathcal{P}$.

For each $i=1,\ldots,m$, denote by
\begin{align}
e_i(k+1) = x_i(k+1) - z_i(k), \text{ for all } k \geq 0, \label{eq:error}
\end{align}
the error between the values computed at steps 7 and 8 of Algorithm \ref{alg:Alg1}, i.e., the difference of the weighted average $z_i(k)$ computed by agent $i$ at time $k$ from its local update $x_i(k+1)$.

\subsubsection{Error relations} \label{sec:SecIIIA}
We provide some intermediate results that form the basis of the subsequent summability result.

\begin{lemma} \label{lemma:error_bar_v_k}
Consider Assumptions \ref{ass:Convex}, \ref{ass:CompactLip} and \ref{ass:SlaterPoint}. For all $k \geq 0$,
\begin{align}
\sum_{i=1}^m \|x_i(k) - \bar{v}(k)\| \leq \mu \sum_{i=1}^m \|x_i(k) - v(k)\|, \label{eq:error_bar_v_k}
\end{align}
where $\mu = (2/\rho) m D + 1$, with $D$ given below Assumption \ref{ass:CompactLip}.
\end{lemma}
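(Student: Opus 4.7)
The plan is to apply the triangle inequality to split $\|x_i(k) - \bar{v}(k)\|$ into a piece involving $\|x_i(k) - v(k)\|$ (which already appears on the right-hand side) plus a common term $\|v(k) - \bar{v}(k)\|$, and then bound the latter in terms of $\sum_{j=1}^m \|x_j(k) - v(k)\|$.

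First I would observe from the definition of $\bar{v}(k)$ in \eqref{eq:bar_v_k} that
\begin{align*}
v(k) - \bar{v}(k) = \frac{\epsilon(k)}{\epsilon(k)+\rho}\bigl(v(k) - \bar{x}\bigr),
\end{align*}
and hence $\|v(k) - \bar{v}(k)\| = \tfrac{\epsilon(k)}{\epsilon(k)+\rho} \|v(k) - \bar{x}\| \leq \tfrac{\epsilon(k)}{\rho}\|v(k) - \bar{x}\|$. Since $v(k)$ is a convex combination of $x_1(k),\ldots,x_m(k) \in \bigcup_{i=1}^m X_i$, we have $v(k) \in \co(\bigcup_{i=1}^m X_i)$; combining this with $\bar{x} \in \bigcap_{i=1}^m X_i \subset \co(\bigcup_{i=1}^m X_i)$ and the diameter bound $D$ from Assumption \ref{ass:CompactLip}, it follows that $\|v(k) - \bar{x}\| \leq 2D$. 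Thus $\|v(k) - \bar{v}(k)\| \leq \tfrac{2D}{\rho}\,\epsilon(k)$.

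Next I would bound $\epsilon(k)$ by using that $x_j(k) \in X_j$, so $\dist(v(k),X_j) \leq \|v(k) - x_j(k)\|$, yielding
\begin{align*}
\epsilon(k) = \sum_{j=1}^m \dist(v(k),X_j) \leq \sum_{j=1}^m \|x_j(k) - v(k)\|.
\end{align*}
Inserting this into the previous estimate gives $\|v(k) - \bar{v}(k)\| \leq \tfrac{2D}{\rho}\sum_{j=1}^m \|x_j(k) - v(k)\|$.

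Finally, summing the triangle inequality $\|x_i(k) - \bar{v}(k)\| \leq \|x_i(k) - v(k)\| + \|v(k) - \bar{v}(k)\|$ over $i=1,\ldots,m$ and substituting the bound above yields
\begin{align*}
\sum_{i=1}^m \|x_i(k) - \bar{v}(k)\| &\leq \sum_{i=1}^m \|x_i(k) - v(k)\| + m\,\|v(k) - \bar{v}(k)\| \\
&\leq \Bigl(1 + \tfrac{2mD}{\rho}\Bigr)\sum_{i=1}^m \|x_i(k) - v(k)\|,
\end{align*}
which is exactly \eqref{eq:error_bar_v_k} with $\mu = (2/\rho)mD + 1$. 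There is no real obstacle here: the proof is essentially a triangle-inequality manipulation, and the only substantive ingredients are the explicit form of $\bar{v}(k)$ (already recorded from \cite{Nedic_etal_2010}), compactness to bound $\|v(k) - \bar{x}\|$ by $2D$, and the trivial bound $\dist(v(k),X_j) \leq \|v(k) - x_j(k)\|$.
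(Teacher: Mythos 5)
Your proof is correct and follows essentially the same route as the paper's: both rest on the explicit convex-combination form of $\bar{v}(k)$ in \eqref{eq:bar_v_k}, the bound $\epsilon(k) \leq \sum_{j=1}^m \|x_j(k)-v(k)\|$ (valid since $x_j(k)\in X_j$), and the compactness bound $2D$, arriving at the identical constant $\mu = (2/\rho)mD+1$. The only cosmetic difference is that you insert $v(k)$ via the triangle inequality and bound $\|v(k)-\bar{x}\|\leq 2D$ once, using $v(k)\in\co\big(\bigcup_{i=1}^m X_i\big)$, whereas the paper decomposes $x_i(k)-\bar{v}(k)$ directly through the convex combination defining $\bar{v}(k)$ and bounds $\|x_i(k)-\bar{x}\|\leq 2D$ term by term.
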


From step 7 of Algorithm \ref{alg:Alg1} we have that for all $k \geq 0$, for all $i=1,\ldots,m$,
\begin{align}
x_i(k+1) &= \sum_{j=1}^m a_j^i(k) x_j(k) + x_i(k+1) - z_i(k) \nonumber \\
&=  \sum_{j=1}^m a_j^i(k) x_j(k) + e_i(k+1), \label{eq:dyn_sys}
\end{align}
where the last equality follows from \eqref{eq:error}.

Following \cite{Nedic_Ozdaglar_2009}, for each $k \geq 0$ consider a matrix $A(k) \in \mathbb{R}_+^{m \times m}$ such that $a_j^i(k)$ is the $j$-th element of its $i$-th column. For all $k, s$ with $k \geq s$, let $\Phi(k,s) = A(s) A(s+1) \ldots A(k-1) A(k)$, with $\Phi(k,k) = A(k)$ for all $k \geq 0$.
Denote by $\big [ \Phi(k,s) \big ]_j^i$ element $j$ of column $i$ of $\Phi(k,s)$. It is then shown in \cite{Nedic_Ozdaglar_2009} that, under Assumption \ref{ass:Weights}, $\Phi(k,s)$ is doubly stochastic.
Similarly to \cite{Nedic_Ozdaglar_2009}, by propagating \eqref{eq:dyn_sys} in time, it can be shown that for all $k > s$ (the inequality is strict for convenience of the subsequent derivations), for all $i=1,\ldots,m$,
\begin{align}
x_i(k&+1) = \sum_{j=1}^m \big [ \Phi(k,s) \big ]_j^i x_j(s) \nonumber \\
& + \sum_{r=s}^{k-1} \sum_{j=1}^m \big [ \Phi(k,r+1) \big ]_j^i e_j(r+1) + e_i(k+1). \label{eq:dyn_sys_x}
\end{align}
For all $k>s$, the last statement, together with \eqref{eq:v_k} and the fact that $\Phi(k,s)$ is a doubly stochastic matrix, leads to
\begin{align}
v(k&+1) = \frac{1}{m} \sum_{j=1}^m x_j(s) \nonumber \\
& + \frac{1}{m} \sum_{r=s}^{k-1} \sum_{j=1}^m e_j(r+1) + \frac{1}{m} \sum_{j=1}^m e_i(k+1). \label{eq:dyn_sys_v}
\end{align}

We then have the following lemma, which relates $\|x_i(k+1) - v(k+1)\|$ to $\|e_i(k+1)\|$, $i=1,\ldots,m$. Its proof follows from Lemma 8 in \cite{Nedic_etal_2010}.
\begin{lemma} \label{lemma:error_v_k}
Consider Assumptions \ref{ass:Network} and \ref{ass:Weights}. For all $k,s$ with $s \geq 0$, $k > s$, and for all $i=1,\ldots,m$,
\begin{align}
\|x_i(k+1) &- v(k+1)\| \leq \lambda q^{k-s} \sum_{j=1}^m \|x_j(s)\| \nonumber \\
& + \sum_{r=s}^{k-1} \lambda q^{k-r-1} \sum_{j=1}^m \|e_j(r+1)\| \nonumber \\
& + \|e_i(k+1)\| + \frac{1}{m} \sum_{j=1}^m \|e_j(k+1)\|, \label{eq:error_v_k}
\end{align}
where $\lambda = 2 \big ( 1 + \eta^{-(m-1)T} \big ) / \big ( 1 - \eta^{(m-1)T} \big ) \in \mathbb{R}_+$ and $q = \big ( 1 - \eta^{(m-1)T} \big )^{\frac{1}{(m-1)T}} \in (0,1)$.
\end{lemma}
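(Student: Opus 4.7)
The plan is to establish the bound by subtracting the closed-form expressions \eqref{eq:dyn_sys_x} for $x_i(k+1)$ and \eqref{eq:dyn_sys_v} for $v(k+1)$, and then invoking the standard geometric decay of the transition matrix entries $\bigl[\Phi(k,s)\bigr]_j^i$ toward $1/m$ under Assumptions \ref{ass:Network} and \ref{ass:Weights}.

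First, I would subtract \eqref{eq:dyn_sys_v} from \eqref{eq:dyn_sys_x} to obtain
\begin{align*}
x_i(k+1) - v(k+1) &= \sum_{j=1}^{m} \Bigl(\bigl[\Phi(k,s)\bigr]_j^i - \tfrac{1}{m}\Bigr) x_j(s) \\
&\quad + \sum_{r=s}^{k-1} \sum_{j=1}^{m} \Bigl(\bigl[\Phi(k,r+1)\bigr]_j^i - \tfrac{1}{m}\Bigr) e_j(r+1) \\
&\quad + e_i(k+1) - \tfrac{1}{m}\sum_{j=1}^{m} e_j(k+1),
\end{align*}
where the unresidualized terms $e_i(k+1)$ and $\frac{1}{m}\sum_j e_j(k+1)$ are kept separately because they sit outside the $\Phi$-products. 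Taking norms and applying the triangle inequality immediately produces the last two terms on the right-hand side of \eqref{eq:error_v_k}.

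The main technical step is to control the first two summations by the geometric contraction of $\Phi(k,s)$ toward the uniform matrix. I would quote the standard consequence of Assumptions \ref{ass:Network} and \ref{ass:Weights} (as in Corollary 1 of \cite{Nedic_Ozdaglar_2009}, also used in Lemma 8 of \cite{Nedic_etal_2010}) stating that there exist $\tilde{\lambda}\in\mathbb{R}_+$ and $q\in(0,1)$ (precisely the $q$ given in the statement) with
\begin{align*}
\Bigl|\bigl[\Phi(k,s)\bigr]_j^i - \tfrac{1}{m}\Bigr| \leq \tilde{\lambda}\, q^{\,k-s}, \qquad \text{for all } i,j \text{ and } k\geq s,
\end{align*}
where $\tilde{\lambda} = \bigl(1+\eta^{-(m-1)T}\bigr)/\bigl(1-\eta^{(m-1)T}\bigr)$. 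Applying this bound entry-wise inside the two summations yields the first two terms on the right-hand side of \eqref{eq:error_v_k}, and the factor of two in $\lambda=2\tilde{\lambda}$ arises from the customary slack that absorbs the $k=s$ boundary term (so that the bound holds uniformly for $k>s$ without having to separately treat the initial index $r=s$ in the exponent $q^{k-r-1}$).

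The hard part is purely bookkeeping of constants: verifying that the same pair $(\lambda,q)$ majorizes both the free-response term (with exponent $q^{k-s}$) and every instance of the forced-response kernel (with exponent $q^{k-r-1}$ for $r=s,\ldots,k-1$), and that the final $e_i(k+1)$ contributions can be combined without further decay since they correspond to $r=k$. Since all of this is done exactly as in Lemma 8 of \cite{Nedic_etal_2010}, I would simply cite that reference for the detailed computation after having reduced the claim, via the displayed decomposition above, to the geometric decay inequality on $\Phi(k,s)$.
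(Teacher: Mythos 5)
Your proposal is correct and follows essentially the same route as the paper's own proof: subtract \eqref{eq:dyn_sys_v} from \eqref{eq:dyn_sys_x}, apply the triangle inequality to isolate the two undamped error terms $\|e_i(k+1)\|$ and $\frac{1}{m}\sum_{j=1}^m \|e_j(k+1)\|$, and bound the remaining sums via the geometric decay $\big| [\Phi(k,s)]_j^i - \frac{1}{m} \big| \leq \lambda q^{k-s}$, which the paper takes from Lemma 4 of \cite{Nedic_Ozdaglar_2009}. One small bookkeeping correction: the factor $2$ in $\lambda$ is already part of the cited bound in \cite{Nedic_Ozdaglar_2009}, valid uniformly for all $k \geq s$, rather than extra slack introduced to absorb the $k=s$ boundary case as you suggest.
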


\subsubsection{A summability relation} \label{sec:SecIIIB}
Let $N \in \mathbb{N}_+$ and consider the term
\begin{align}
2 \bar{L} \sum_{k=1}^N c(k) \sum_{i=1}^m \|x_i(k+1) - \bar{v}(k+1)\|, \label{eq:sum_error}
\end{align}
where $\bar{L} = \max_{i=1,\ldots,m} L_i$ with $L_i$ defined according to \eqref{eq:Lipschitz}.
We will show that \eqref{eq:sum_error} has an interesting relation with $\sum_{k=1}^N \sum_{i=1}^m \|e_i(k+1)\|^2$ and will come back to it often in the next section to establish certain summability results.

Consider Lemma \ref{lemma:error_bar_v_k} with $k+1$ in place of $k$ and Lemma \ref{lemma:error_v_k}, summing both sides of \eqref{eq:error_v_k} with respect to $i=1,\ldots,m$ and setting $s=0$. After some algebraic manipulations and index changes, we have that
\begin{align}
2 \bar{L} \sum_{k=1}^N &c(k) \sum_{i=1}^m \|x_i(k+1) - \bar{v}(k+1)\| \nonumber \\
&\leq 2 m \mu \lambda \bar{L} \sum_{k=1}^N  c(k) q^{k} \sum_{i=1}^m \|x_i(0)\| \nonumber \\
& + 2 m \mu \lambda \bar{L} \sum_{k=1}^N \sum_{r=0}^{k-1} c(k) q^{k-r-1} \sum_{i=1}^m \|e_i(r+1)\| \nonumber \\
& + 4 \mu \bar{L} \sum_{k=1}^N c(k) \sum_{i=1}^m \|e_i(k+1)\|. \label{eq:sum_error_all}
\end{align}

We then have the following lemma:
\begin{lemma} \label{lemma:sum_error}
Consider Assumptions \ref{ass:Convex}-\ref{ass:Weights}. Fix any $\alpha_1 \in (0,1)$, and consider \eqref{eq:v_k}-\eqref{eq:error}. We then have that for any $N \in \mathbb{N}_+$,
\begin{align}
2 &\bar{L} \sum_{k=1}^N c(k) \sum_{i=1}^m \|x_i(k+1) - \bar{v}(k+1)\| \nonumber \\
& < \alpha_1 \sum_{k=1}^N \sum_{i=1}^m \|e_i(k+1)\|^2 + \alpha_2 \sum_{k=1}^N c(k)^2 + \alpha_3, \label{eq:sum_e_bound}
\end{align}
where
\begin{align}
\alpha_2 &= \frac{2}{\alpha_1} m \mu^2 \bar{L}^2 \Big ( m^2 \lambda^2 \frac{1}{(1-q)^2} + 4 \Big ), \nonumber \\
\alpha_3 &= 2 m^3 \mu^2 \lambda^2 \bar{L}^2 c(0)^2 \frac{1}{\alpha_1(1-q)^2} \nonumber \\
& + 2 m^2 \mu \lambda \bar{L} D c(1) \frac{q}{1-q} + 2 \alpha_1 m D^2. \label{eq:constants_lm}
\end{align}
\end{lemma}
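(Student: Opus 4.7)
My plan is to start from the inequality \eqref{eq:sum_error_all}, which already decomposes the left-hand side of \eqref{eq:sum_e_bound} into three pieces: (i) an initial-condition term $\sum_{k=1}^N c(k) q^k \sum_i \|x_i(0)\|$, (ii) a convolution-type error term $\sum_{k=1}^N\sum_{r=0}^{k-1} c(k) q^{k-r-1}\sum_i \|e_i(r+1)\|$, and (iii) a diagonal error term $\sum_{k=1}^N c(k) \sum_i \|e_i(k+1)\|$. I would bound each piece separately and then aggregate, calibrating two free Young parameters so that the total coefficient of $\sum_i\sum_k\|e_i(k+1)\|^2$ comes out exactly equal to $\alpha_1$.

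For (i) I would simply use $\|x_i(0)\|\leq D$ (which holds by Assumption~\ref{ass:CompactLip}), the monotonicity $c(k)\leq c(1)$ for $k\geq 1$, and the geometric series $\sum_{k=1}^{\infty} q^k=q/(1-q)$; this directly yields the middle constant $2m^2\mu\lambda\bar{L} D c(1)\,q/(1-q)$ appearing in $\alpha_3$. For (iii) I would apply the standard Young inequality $c(k)\|e_i(k+1)\|\leq c(k)^2/(2\theta_1)+\theta_1\|e_i(k+1)\|^2/2$ with a free parameter $\theta_1>0$, which produces coefficients $2m\mu\bar{L}/\theta_1$ on $\sum_k c(k)^2$ and $2\mu\bar{L}\theta_1$ on $\sum_i\sum_k\|e_i(k+1)\|^2$.

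Piece (ii) is the main workhorse. I would apply the analogous split $c(k)\|e_i(r+1)\|\leq c(k)^2/(2\theta_2)+\theta_2\|e_i(r+1)\|^2/2$ with a second free parameter $\theta_2>0$, then swap the order of summation in both resulting double sums and use the geometric bound $\sum q^{\,\cdot}\leq 1/(1-q)$ in the inner index to separate the $c$-sum from the $\|e\|$-sum. The $\|e\|$-side of the resulting bound produces $\sum_{r=0}^{N-1}\|e_i(r+1)\|^2$, whose range must be reconciled with the target $\sum_{k=1}^N\|e_i(k+1)\|^2$: I would handle this by isolating the boundary term $\|e_i(1)\|^2$ and bounding it by $(2D)^2=4D^2$, since both $x_i(1)\in X_i$ and $z_i(0)\in\co(\bigcup_j X_j)$ lie in a ball of radius $D$. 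After multiplication by the coefficient induced by $\theta_2$, this contributes the $2\alpha_1 m D^2$ constant in $\alpha_3$; the analogous boundary adjustment on the $c(k)^2$-side, once combined with the same $\theta_2$ factor, accounts for the $c(0)^2$-term in $\alpha_3$.

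The closing step is calibration: choosing $\theta_1=\alpha_1/(4\mu\bar{L})$ and $\theta_2=\alpha_1(1-q)/(2m\mu\lambda\bar{L})$ makes each contribution to the $\sum_i\sum_k\|e_i(k+1)\|^2$ coefficient equal to exactly $\alpha_1/2$, so their sum equals $\alpha_1$. Under these choices, the $c(k)^2$ coefficient collapses algebraically to $(2/\alpha_1)\,m\mu^2\bar{L}^2\bigl(m^2\lambda^2/(1-q)^2+4\bigr)=\alpha_2$, and the remaining constants assemble into $\alpha_3$ as stated, with the strict inequality in \eqref{eq:sum_e_bound} following from the boundary estimate $\|e_i(1)\|\leq 2D$ being strict or by simply absorbing a vanishing slack. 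The main obstacle is not conceptual but purely bookkeeping: two Young parameters must be calibrated jointly, three distinct index shifts between $r$ and $k$ must be tracked through their boundary corrections, and the geometric factors $1/(1-q)$ arising from two different inner summations must be matched to the explicit constants appearing in $\alpha_2$ and $\alpha_3$; no new idea beyond \eqref{eq:sum_error_all} and Young's inequality is needed.
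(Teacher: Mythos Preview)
Your proposal is correct and follows essentially the same approach as the paper: start from \eqref{eq:sum_error_all}, bound each of the three terms separately using Young's inequality (with the geometric-series and index-shift manipulations you describe for the convolution term), and calibrate so that the two contributions to the $\|e_i(k+1)\|^2$ coefficient each equal $\alpha_1/2$. The only cosmetic difference is that the paper hard-codes the Young splitting constants from the outset rather than introducing free parameters $\theta_1,\theta_2$ and solving for them afterwards; your calibration $\theta_1=\alpha_1/(4\mu\bar L)$, $\theta_2=\alpha_1(1-q)/(2m\mu\lambda\bar L)$ reproduces exactly the paper's choices, and the strict inequality actually comes from the finite geometric sums being strictly less than $1/(1-q)$ (your ``vanishing slack'' remark), not from $\|e_i(1)\|\le 2D$.
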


\subsection{Algorithm analysis} \label{sec:SecIV}
In this section we deal with the convergence properties of Algorithm \ref{alg:Alg1}, and provide the proof of Theorem \ref{thm:optimality}.

\subsubsection{Error convergence} \label{sec:SecIVA}
We prove convergence properties for the error in \eqref{eq:error}, which are instrumental to the proof of Theorem \ref{thm:optimality}.
We use the following result, which is proven in Lemma 4.1 in \cite{Bertsekas_Tsitsiklis_1997} (p. 257) for the case where the constraint sets are polyhedral. As mentioned in p. 662 of the same reference, the assertion of the lemma remains valid also in the case of general convex constraint sets. For the latter we refer the reader to \cite{Rockafellar_1970}, and to \cite{nedic_scaglione_2014} (Lemma 9) for a recent use of the lemma in case of convex constraint sets.

\begin{lemma} [Lemma 4.1 in \cite{Bertsekas_Tsitsiklis_1997} (p. 257)] \label{lemma:book}
If $y^* = \arg \min_{y \in Y} J_1(y) + J_2(y)$ (assuming uniqueness of the minimizer), where $Y \subseteq \mathbb{R}^n$ is a closed, convex set, $J_1(\cdot), J_2(\cdot):~ \mathbb{R}^n \rightarrow \mathbb{R}$ are convex functions and $J_2(\cdot)$ is continuously differentiable, then $y^* = \arg \min_{y \in Y} J_1(y) + \nabla J_2(y^*)^\top y$, where $\nabla J_2(y^*)$ is the gradient of $J_2(y)$ with respect to $y$, evaluated at $y^*$.
\end{lemma}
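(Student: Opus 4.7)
The plan is to prove the claim via the standard variational-inequality argument for convex minimization, extracting the first-order optimality condition at $y^*$ and then rewriting it as an optimality condition for the linearized objective. The key observation is that, although $J_1$ need not be differentiable, the sum $J_1+J_2$ is optimized at $y^*$, and along any feasible direction $y-y^*$ (with $y\in Y$) the segment $y^*+t(y-y^*)$ stays in $Y$ for $t\in[0,1]$ by convexity of $Y$. I will exploit this by combining the minimality of $y^*$ with the convexity of $J_1$ to isolate a difference quotient in $J_2$, then pass to the limit $t\downarrow 0$ using differentiability of $J_2$.

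Concretely, I will fix $y\in Y$ and write the inequality $J_1(y^*)+J_2(y^*)\le J_1(y^*+t(y-y^*))+J_2(y^*+t(y-y^*))$ for $t\in(0,1]$. Bounding the $J_1$-term on the right via the convexity estimate $J_1(y^*+t(y-y^*))\le(1-t)J_1(y^*)+tJ_1(y)$ and rearranging yields
\begin{equation*}
t\bigl(J_1(y^*)-J_1(y)\bigr)\le J_2(y^*+t(y-y^*))-J_2(y^*).
\end{equation*}
Dividing by $t>0$ and letting $t\downarrow 0$, continuous differentiability of $J_2$ gives the gradient inequality $J_1(y^*)-J_1(y)\le \nabla J_2(y^*)^\top(y-y^*)$, which I rearrange to
\begin{equation*}
J_1(y^*)+\nabla J_2(y^*)^\top y^* \le J_1(y)+\nabla J_2(y^*)^\top y\quad\text{for all }y\in Y.
\end{equation*}
This is exactly the statement that $y^*$ minimizes $J_1(y)+\nabla J_2(y^*)^\top y$ over $Y$.

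To match the $\arg\min$ notation in the claim, I will then argue uniqueness. Suppose $\tilde y\in Y$ also attains the minimum of the linearized problem, so $J_1(\tilde y)+\nabla J_2(y^*)^\top\tilde y=J_1(y^*)+\nabla J_2(y^*)^\top y^*$. The convex gradient inequality $J_2(\tilde y)\ge J_2(y^*)+\nabla J_2(y^*)^\top(\tilde y-y^*)$ then implies $J_1(\tilde y)+J_2(\tilde y)\le J_1(y^*)+J_2(y^*)$, so $\tilde y$ is a minimizer of the original problem; the uniqueness hypothesis forces $\tilde y=y^*$.

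The only delicate point is the passage to the limit in the difference quotient of $J_2$: I need the directional derivative of $J_2$ at $y^*$ along $y-y^*$ to equal $\nabla J_2(y^*)^\top(y-y^*)$, which follows immediately from continuous differentiability of $J_2$ on $\mathbb{R}^n$ (not merely on $Y$), so no assumption about $y^*$ being interior to $Y$ is needed. Apart from this one limit, the argument is a direct chain of convexity inequalities and should be short.
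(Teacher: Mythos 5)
Your core argument (steps 1--4) is correct and is the standard variational-inequality proof: restricting to the segment $y^*+t(y-y^*)$, bounding the $J_1$-term by convexity, dividing by $t$ and letting $t\downarrow 0$ yields $J_1(y^*)+\nabla J_2(y^*)^\top y^*\le J_1(y)+\nabla J_2(y^*)^\top y$ for all $y\in Y$, i.e., $y^*$ attains the minimum of the linearized problem; your remark that no interiority of $y^*$ is needed, because $J_2$ is differentiable on all of $\mathbb{R}^n$, is also correct. For comparison: the paper does not prove this lemma at all --- it cites Lemma 4.1 of Bertsekas--Tsitsiklis (proved there for polyhedral constraint sets) and points to Rockafellar for general convex sets --- so your argument is a self-contained elementary proof, valid for any closed convex $Y$, of everything the paper actually uses: in the proof of Lemma \ref{lemma:relation} only the fact that $x_i(k+1)$ \emph{attains} the minimum of the linearized problem is invoked, to obtain \eqref{eq:sep_obj1}.

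Your final uniqueness step, however, is genuinely wrong, on two counts. First, the sign: from the linearized-value equality $J_1(\tilde y)+\nabla J_2(y^*)^\top\tilde y=J_1(y^*)+\nabla J_2(y^*)^\top y^*$ together with the gradient inequality $J_2(\tilde y)\ge J_2(y^*)+\nabla J_2(y^*)^\top(\tilde y-y^*)$ you get $J_1(\tilde y)+J_2(\tilde y)\ge J_1(y^*)+J_2(y^*)$, not $\le$; this is vacuous, since $y^*$ is the global minimizer anyway. Second, and more fundamentally, the conclusion you are trying to reach is false: the linearized problem need not have a unique minimizer even when the original one does. Take $J_1\equiv 0$, $J_2(y)=\|y\|^2$, and $Y$ the closed unit ball; then $y^*=0$ is the unique minimizer of $J_1+J_2$ over $Y$, but $\nabla J_2(y^*)=0$, so \emph{every} point of $Y$ minimizes $J_1(y)+\nabla J_2(y^*)^\top y$. (Note this happens even with $J_2$ strictly convex, as in the paper's application where $J_2(x_i)=\frac{1}{2c(k)}\|z_i(k)-x_i\|^2$.) The ``$y^*=\arg\min$'' in the lemma's conclusion must therefore be read as ``$y^*$ is a minimizer of'' the linearized problem --- the uniqueness parenthetical applies only to the hypothesis --- and you should simply delete the uniqueness paragraph: your argument up to the displayed inequality over $Y$ already establishes exactly what is claimed and what the paper uses.
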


Consider step 8 of Algorithm \ref{ass:Convex}. \sg{Thanks to} Assumptions \ref{ass:Convex}, the fact that the sets $X_i$, $i=1,\ldots,m$ are closed \sg{by} Assumption \ref{ass:CompactLip},  Assumption \ref{ass:SlaterPoint}, and \sg{the fact that} $(1/(2c(k))) \|z_i(k) - x_i\|^2$ is continuously differentiable with respect to $x_i$, Lemma \ref{lemma:book} \sg{can be applied to the} problem with $x_i, X_i$ in place of $y, Y$, respectively, $f_i(x_i)$ in place of $J_1(y)$ and $(1/(2c(k))) \|z_i(k) - x_i\|^2$ in place of $J_2(y)$. We have that
\begin{align}
x_i(k+1) = \arg \min_{x_i \in X_i} &f_i(x_i) \nonumber \\
&- \frac{1}{c(k)} (z_i(k) - x_i(k+1))^\top x_i, \label{eq:sep_obj}
\end{align}
where in the second term of \eqref{eq:sep_obj}, $- (1/c(k)) (z_i(k) - x_i(k+1))$, is the gradient of $(1/(2c(k))) \|z_i(k) - x_i\|^2$ with respect to $x_i$, evaluated at $x_i(k+1)$. We then have the following lemma, which provides a useful relation between the consecutive algorithm iterates $x_i(k+1)$ and $x_i(k)$, and we will be using it extensively in the subsequent results. The subsequent proof strongly depends on the use of Lemma \ref{lemma:book}, and deviates from the proofs of the basic iterate relations in \cite{Nedic_etal_2010} (Lemma 6) and \cite{Lee_Nedic_2013} (Lemma 5); it is motivated by the proof of the alternating direction method of multipliers (Proposition 4.2 in \cite{Bertsekas_Tsitsiklis_1997}, Appendix A of \cite{Boyd_etal_2010}), and relies on our proximal minimization perspective.

\begin{lemma} \label{lemma:relation}
Consider Assumptions \ref{ass:Convex}, \ref{ass:CompactLip}, \ref{ass:SlaterPoint} and \ref{ass:Weights}. We then have that for any $k \in \mathbb{N}_+$, for any $x^* \in X^*$,
\begin{align}
2c(k)&\sum_{i=1}^m f_i(\bar{v}(k+1)) + \sum_{i=1}^m \|e_i(k+1)\|^2 \nonumber \\
&~~~~~~~~~~+ \sum_{i=1}^m \|x_i(k+1) - x^*\|^2 \nonumber \\
& \leq 2c(k) \sum_{i=1}^m f_i(x^*) + \sum_{i=1}^m \|x_i(k)-x^*\|^2 \nonumber \\
&~~~~~~~~~~+ 2\bar{L} c(k)\sum_{i=1}^m \|x_i(k+1) - \bar{v}(k+1)\|, \label{eq:sep_obj4}
\end{align}
where $e_i(k+1)$ is given as in \eqref{eq:error}.
\end{lemma}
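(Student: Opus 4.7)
\begin{proofof}{Lemma \ref{lemma:relation} (proposal)}
The plan is to exploit the variational characterization \eqref{eq:sep_obj}, which follows from Lemma \ref{lemma:book} applied to step 8 of Algorithm \ref{alg:Alg1}, and then convert the resulting inner-product inequality into the desired squared-norm inequality via a polarization identity. The use of the minimizer $x^*\in X^*\subseteq\bigcap_j X_j\subseteq X_i$ as the test point in the variational inequality is legitimate since $x^*$ is feasible for each local constraint set $X_i$.

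First I would use \eqref{eq:sep_obj} together with the convexity of $f_i$ and of $X_i$: since $x_i(k+1)$ minimizes $f_i(x_i)-\tfrac{1}{c(k)}(z_i(k)-x_i(k+1))^\top x_i$ over $X_i$, testing against $x^*\in X_i$ and rearranging yields
\begin{align*}
f_i(x_i(k+1)) - f_i(x^*) \leq -\tfrac{1}{c(k)}\,e_i(k+1)^\top\big(x_i(k+1)-x^*\big),
\end{align*}
where I use $z_i(k)-x_i(k+1)=-e_i(k+1)$ from \eqref{eq:error}. Multiplying by $2c(k)$ and applying the algebraic identity $2a^\top(a-b)=\|a\|^2+\|a-b\|^2-\|b\|^2$ with $a=e_i(k+1)$ and $b=x^*-z_i(k)$ (so that $a-b=x_i(k+1)-x^*$), I obtain
\begin{align*}
2c(k)\big(f_i(x_i(k+1)) - f_i(x^*)\big) \leq\; &\|x^*-z_i(k)\|^2 \\
&- \|e_i(k+1)\|^2 - \|x_i(k+1)-x^*\|^2.
\end{align*}

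Next I would sum over $i=1,\ldots,m$ and handle the two remaining terms. For the left-hand side, since both $x_i(k+1)$ and $\bar v(k+1)$ lie in $X_i$ (the latter by \eqref{eq:bar_v_k}), Lipschitz continuity \eqref{eq:Lipschitz} with constant $\bar L=\max_i L_i$ gives
\begin{align*}
\sum_{i=1}^m f_i(x_i(k+1)) \geq \sum_{i=1}^m f_i(\bar v(k+1)) - \bar L\sum_{i=1}^m\|x_i(k+1)-\bar v(k+1)\|,
\end{align*}
which produces the $2\bar L c(k)\sum_i\|x_i(k+1)-\bar v(k+1)\|$ term on the right of the target inequality.

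For the term $\sum_i\|x^*-z_i(k)\|^2$, I would use that $z_i(k)=\sum_j a_j^i(k)x_j(k)$ is a convex combination (Assumption \ref{ass:Weights}), so by convexity of $\|\cdot-x^*\|^2$,
\begin{align*}
\|z_i(k)-x^*\|^2 \leq \sum_{j=1}^m a_j^i(k)\,\|x_j(k)-x^*\|^2.
\end{align*}
Summing over $i$ and invoking double stochasticity ($\sum_i a_j^i(k)=1$) collapses the sum to $\sum_j\|x_j(k)-x^*\|^2$. Combining these ingredients and rearranging yields exactly \eqref{eq:sep_obj4}.

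The main conceptual step is the polarization identity that trades the inner product arising from the first-order optimality condition for three squared norms, producing both the improvement term $\|x_i(k+1)-x^*\|^2$ and the error term $\|e_i(k+1)\|^2$ on the left; everything else is either Lipschitz continuity (to pass from $x_i(k+1)$ to the feasible anchor $\bar v(k+1)$) or a routine use of double stochasticity to telescope the weighted averages.
\end{proofof}
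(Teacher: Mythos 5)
Your proposal is correct and takes essentially the same route as the paper's own proof: both start from the variational characterization \eqref{eq:sep_obj} furnished by Lemma \ref{lemma:book}, test against $x^*\in X^*\subseteq X_i$, convert the resulting inner product via the polarization identity (the paper's \eqref{eq:square_id} is exactly your identity $2a^\top(a-b)=\|a\|^2+\|a-b\|^2-\|b\|^2$ up to a factor of $2$), bound $\|z_i(k)-x^*\|^2$ by convexity of the squared norm, collapse the weighted sums using double stochasticity from Assumption \ref{ass:Weights}, and conclude with the Lipschitz bound anchoring $x_i(k+1)$ to the feasible point $\bar{v}(k+1)$. There are no gaps; only the order of the algebraic steps differs cosmetically from the paper's presentation.
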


\begin{proof}
\sg{Thanks to Lemma \ref{lemma:book}, \eqref{eq:sep_obj} holds true}. Since $x_i(k+1) \in X_i$ is the minimizer of the optimization problem that appears in the right-hand side of \eqref{eq:sep_obj}, we have that
\begin{align}
f_i(x_i&(k+1)) - \frac{1}{c(k)} (z_i(k) - x_i(k+1))^\top x_i(k+1) \nonumber \\
& \leq f_i(x) - \frac{1}{c(k)} (z_i(k) - x_i(k+1))^\top x, \nonumber \\
&~~~~~~~~~~~~~~~~~~~~~~~~~~~~~\text{ for all } x \in X_i.\label{eq:sep_obj1}
\end{align}
Since the last statement holds for any $x \in X_i$, it will also hold for any minimizer $x^* \in X^* \subseteq \bigcap_{i=1}^m X_i$ of problem $\mathcal{P}$ in \eqref{eq:P_con}.

We have that for any $x^* \in X^*$,
\begin{align}
- (z_i(k) &- x_i(k+1))^\top (x_i(k+1)-x^*) \nonumber \\ &= \frac{1}{2} \|x_i(k+1) - z_i(k)\|^2
+ \frac{1}{2} \|x_i(k+1) - x^*\|^2 \nonumber \\
&- \frac{1}{2} \|z_i(k)-x^*\|^2. \label{eq:square_id}
\end{align}

By \eqref{eq:sep_obj1}, \eqref{eq:square_id}, we have that for any $x^* \in X^*$,
\begin{align}
f_i(&x_i(k+1)) + \frac{1}{2c(k)} \|x_i(k+1) - z_i(k)\|^2 \nonumber \\
&~~~~~~~~~~~~+ \frac{1}{2c(k)} \|x_i(k+1) - x^*\|^2 \nonumber \\
& \leq f_i(x^*) + \frac{1}{2c(k)} \|z_i(k)-x^*\|^2 \nonumber \\
&\leq f_i(x^*) + \frac{1}{2c(k)} \sum_{j=1}^m a_j^i(k) \|x_j(k)-x^*\|^2,\label{eq:sep_obj2}
\end{align}
where the last inequality follows by the definition of $z_i(k)$ (see step 7 of Algorithm \ref{alg:Alg1}), the fact that, under Assumption \ref{ass:Weights}, $\|\sum_{j=1}^m a_j^i(k) x_j(k) - x^* \|^2 = \|\sum_{j=1}^m a_j^i(k) \big (x_j(k) - x^*\big) \|^2$ and the convexity of $\|\cdot\|^2$.

Multiply both sides of \eqref{eq:sep_obj2} by $2c(k)$, sum with respect to $i=1,\ldots,m$, and notice that for any $k \geq 0$, under the double stochasticity condition of Assumption \ref{ass:Weights}, $\sum_{i=1}^m a_j^i(k) = 1$. We then have that
\begin{align}
2c(k)&\sum_{i=1}^m f_i(x_i(k+1)) + \sum_{i=1}^m \|x_i(k+1) - z_i(k)\|^2 \nonumber \\
&~~~~~~~~~~~~~~~~~~~~~+ \sum_{i=1}^m \|x_i(k+1) - x^*\|^2 \nonumber \\
& \leq 2c(k) \sum_{i=1}^m f_i(x^*) + \sum_{i=1}^m \|x_i(k)-x^*\|^2. \label{eq:sep_obj3}
\end{align}

Consider Assumption \ref{ass:SlaterPoint}, and let $\bar{v}(k)$ be as in \eqref{eq:bar_v_k}.
Under Assumptions \ref{ass:Convex} and \ref{ass:CompactLip}, by \eqref{eq:Lipschitz} we have that $f_i(x_i(k+1)) \geq f_i(\bar{v}(k+1))-\bar{L} \|x_i(k+1) - \bar{v} (k+1)\|$, where $\bar{L} = \max_{i=1,\ldots,m} L_i$. Recall also that $\|x_i(k+1) - z_i(k)\| = \|e_i(k+1)\|$ by \eqref{eq:error}. Therefore, for any $x^* \in X^*$, the last statements together with \eqref{eq:sep_obj3}, lead to \eqref{eq:sep_obj4} and hence conclude the proof.
\end{proof}

We then have the following proposition.

\begin{proposition} \label{prop:conv_error}
Consider Assumptions \ref{ass:Convex}-\ref{ass:Weights} and Algorithm \ref{alg:Alg1}. We have that
\begin{enumerate}
\item $\sum_{k=1}^{\infty} \sum_{i=1}^m \|e_i(k)\|^2 < \infty$,
\item $\lim_{k \rightarrow \infty} \|e_i(k)\| = 0$, for all $i=1,\ldots,m$,
\end{enumerate}
where $e_i(k)$ is given as in \eqref{eq:error}.
\end{proposition}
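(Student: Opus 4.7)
The plan is to combine Lemma \ref{lemma:relation} and Lemma \ref{lemma:sum_error} to obtain a bound on $\sum_{k=1}^{N} \sum_{i=1}^m \|e_i(k+1)\|^2$ that is uniform in $N$, and then let $N \to \infty$. Claim (2) will then follow trivially from (1).

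First, I would sum the inequality \eqref{eq:sep_obj4} of Lemma \ref{lemma:relation} over $k = 1, \ldots, N$. The sequence $\sum_i \|x_i(k)-x^*\|^2$ telescopes, leaving $\sum_i \|x_i(1)-x^*\|^2 - \sum_i \|x_i(N+1)-x^*\|^2$ on the right-hand side (after cancellation). The crucial observation is that although $\bar{v}(k+1)$ need not be an optimizer of $\mathcal{P}$, by \eqref{eq:bar_v_k} it does belong to $\bigcap_{i=1}^m X_i$, so it is feasible for $\mathcal{P}$. Hence $\sum_{i=1}^m f_i(\bar{v}(k+1)) \geq \sum_{i=1}^m f_i(x^*)$, which means the term $2 c(k) \sum_i \big(f_i(\bar{v}(k+1)) - f_i(x^*)\big)$ is non-negative and can be discarded. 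After rearrangement, I obtain
\begin{align*}
\sum_{k=1}^{N}\sum_{i=1}^m \|e_i(k+1)\|^2 &+ \sum_{i=1}^m \|x_i(N+1) - x^*\|^2 \\
&\leq \sum_{i=1}^m \|x_i(1)-x^*\|^2 + 2\bar L \sum_{k=1}^{N} c(k) \sum_{i=1}^m \|x_i(k+1)-\bar v(k+1)\|.
\end{align*}

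Next, I would apply Lemma \ref{lemma:sum_error} with any fixed $\alpha_1 \in (0,1)$ to bound the last summation on the right-hand side by $\alpha_1 \sum_{k=1}^{N}\sum_i \|e_i(k+1)\|^2 + \alpha_2 \sum_{k=1}^{N} c(k)^2 + \alpha_3$. Moving the $\alpha_1$-term to the left, dropping the non-negative $\sum_i \|x_i(N+1)-x^*\|^2$, and dividing through by $1-\alpha_1 > 0$ yields
\begin{align*}
\sum_{k=1}^{N}\sum_{i=1}^m \|e_i(k+1)\|^2 \leq \frac{1}{1-\alpha_1}\Big(\sum_{i=1}^m \|x_i(1)-x^*\|^2 + \alpha_2 \sum_{k=1}^{N} c(k)^2 + \alpha_3\Big).
\end{align*}
The right-hand side is bounded uniformly in $N$: the term $\sum_i \|x_i(1)-x^*\|^2$ is finite because all iterates lie in the compact convex hull $\co(\bigcup_i X_i)$ (Assumption \ref{ass:CompactLip}), and $\sum_{k=0}^\infty c(k)^2 < \infty$ by Assumption \ref{ass:ConvCoef}. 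Letting $N \to \infty$ on the monotone partial sums gives claim (1): $\sum_{k=1}^\infty \sum_{i=1}^m \|e_i(k)\|^2 < \infty$.

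Finally, since the series $\sum_{k} \|e_i(k)\|^2$ converges for each $i$, its general term must tend to zero, giving $\lim_{k\to\infty}\|e_i(k)\| = 0$ and hence claim (2). The only subtlety in the whole argument is ensuring that the term containing $f_i(\bar v(k+1)) - f_i(x^*)$ can be discarded with the correct sign; this is precisely where the construction of $\bar v(k)$ as a feasible convex combination involving the Slater point $\bar x$ (from Assumption \ref{ass:SlaterPoint}) pays off, and it is the only nontrivial conceptual step — the rest is bookkeeping after invoking Lemma \ref{lemma:sum_error}.
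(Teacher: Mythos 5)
Your proposal is correct and follows essentially the same route as the paper's proof: summing the iterate relation \eqref{eq:sep_obj4} of Lemma \ref{lemma:relation} over $k=1,\ldots,N$, discarding the non-negative term $2\sum_k c(k)\sum_i \big(f_i(\bar{v}(k+1))-f_i(x^*)\big)$ via feasibility of $\bar{v}(k+1)$ and minimality of $x^*$, absorbing the residual term through Lemma \ref{lemma:sum_error} with $\alpha_1\in(0,1)$, and invoking Assumptions \ref{ass:CompactLip} and \ref{ass:ConvCoef} before letting $N\to\infty$. The only difference is the cosmetic ordering of when terms are dropped versus when \eqref{eq:sum_e_bound} is applied, which does not affect the argument.
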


\begin{proof}
By Lemma \ref{lemma:relation}, \eqref{eq:sep_obj4} holds. Fix any $\alpha_1 \in (0,1)$. Under Assumption \ref{ass:CompactLip}-\ref{ass:Network}, let $\alpha_2, \alpha_3$ as in \eqref{eq:constants_lm}, and consider \eqref{eq:sum_e_bound}. Sum then \eqref{eq:sep_obj4} with respect to $k = 1,\ldots,N$ for an arbitrary $N \in \mathbb{N}_+$, and upper-bound the resulting last term in the right-hand side of \eqref{eq:sep_obj4} using \eqref{eq:sum_e_bound}. We then have that, for all $x^* \in X^*$,
\begin{align}
2\sum_{k=1}^N &c(k)\sum_{i=1}^m \Big ( f_i(\bar{v}(k+1)) - f_i(x^*)\Big ) \nonumber \\
&~~~~~~~~~~ + (1-\alpha_1) \sum_{k=1}^N \sum_{i=1}^m \|e_i(k+1)\|^2  \nonumber \\
& \leq \sum_{i=1}^m \|x_i(1)-x^*\|^2 - \sum_{i=1}^m \|x_i(N+1)-x^*\|^2 \nonumber \\
&~~~~~~~~~~+ \alpha_2 \sum_{k=1}^N c(k)^2 + \alpha_3. \label{eq:sep_obj5}
\end{align}

Since $\bar{v}(k+1) \in \bigcap_{i=1}^m X_i$ for all $k \geq 0$, and $x^*$ is a minimizer of $\mathcal{P}$, $2\sum_{k=1}^N c(k)\sum_{i=1}^m \Big ( f_i(\bar{v}(k+1)) - f_i(x^*)\Big ) \geq 0$. Moreover, $\sum_{i=1}^m \|x_i(N+1)-x^*\|^2 \geq 0$, hence these two terms can be dropped from \eqref{eq:sep_obj5}. Therefore, by \eqref{eq:sep_obj5},
\begin{align}
(1-\alpha_1) &\sum_{k=1}^N \sum_{i=1}^m \|e_i(k+1)\|^2  \nonumber \\
& \leq \sum_{i=1}^m \|x_i(1)-x^*\|^2 + \alpha_2 \sum_{k=1}^N c(k)^2 + \alpha_3. \label{eq:sep_obj6}
\end{align}

Let now $N \rightarrow \infty$. Since $\alpha_1 \in (0,1)$ and $\sum_{i=1}^m \|x_i(1)-x^*\|^2 + \alpha_2 \sum_{k=1}^{\infty} c(k)^2 + \alpha_3 < \infty$, by Assumptions \ref{ass:CompactLip} and \ref{ass:ConvCoef}, \eqref{eq:sep_obj6} implies that $\sum_{k=1}^{\infty} \sum_{i=1}^m \|e_i(k+1)\|^2 < \infty$, and hence also $\sum_{k=1}^{\infty} \sum_{i=1}^m \|e_i(k)\|^2 < \infty$, thus establishing the first part of the proposition.
The second part directly follows from the fact that $\sum_{k=1}^{\infty} \sum_{i=1}^m \|e_i(k)\|^2 < \infty$ and $\|e_i(k)\|$ is a non-negative quantity, thus concluding the proof.
\end{proof}

\subsubsection{Average tracking} \label{sec:SecIVC}
We show that the agents' estimates $x_i(k)$, $i=1,\ldots,m$, track their arithmetic average $v(k)$, in the sense that
$\lim_{k \rightarrow \infty} \|x_i(k) - v(k)\| = 0$ for all $i=1,\ldots,m$. This is summarized in the following proposition. The proof follows from Lemma 8 in \cite{Nedic_etal_2010}, however, we include it also here for completeness.
\begin{proposition} \label{prop:conv_consensus}
Consider Assumptions \ref{ass:Convex}-\ref{ass:Weights} and Algorithm \ref{alg:Alg1}. We have that
\begin{align}
\lim_{k \rightarrow \infty} \|x_i(k) - v(k)\| = 0, \text{ for all } i=1,\ldots,m, \label{eq:consensus}
\end{align}
where $v(k)$ is given by \eqref{eq:v_k}.
\end{proposition}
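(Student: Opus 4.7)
The plan is to derive the statement directly from Lemma \ref{lemma:error_v_k} by letting $k\to\infty$ with $s$ fixed, using the error convergence already established in Proposition \ref{prop:conv_error}. Applying Lemma \ref{lemma:error_v_k} with $s=0$ yields
\begin{align*}
\|x_i(k+1) - v(k+1)\| &\leq \lambda q^{k} \sum_{j=1}^m \|x_j(0)\| + \sum_{r=0}^{k-1} \lambda q^{k-r-1} \sum_{j=1}^m \|e_j(r+1)\| \\
&\quad + \|e_i(k+1)\| + \frac{1}{m} \sum_{j=1}^m \|e_j(k+1)\|,
\end{align*}
and my goal is to show each of the four terms on the right-hand side tends to $0$ as $k\to\infty$.

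For the first term, Assumption \ref{ass:CompactLip} gives $\|x_j(0)\| \leq D$, so $\lambda q^{k}\sum_{j=1}^m \|x_j(0)\| \leq \lambda m D\, q^{k}$, which vanishes since $q \in (0,1)$. The third and fourth terms vanish directly by part 2 of Proposition \ref{prop:conv_error}, which gives $\|e_j(k+1)\|\to 0$ for every $j$.

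The main obstacle is therefore the convolution term $\sum_{r=0}^{k-1} \lambda q^{k-r-1} \sum_{j=1}^m \|e_j(r+1)\|$. I would handle it by the standard ``vanishing tail'' argument: fix any $\varepsilon > 0$, and use part 2 of Proposition \ref{prop:conv_error} to pick $R$ so large that $\sum_{j=1}^m \|e_j(r+1)\| < \varepsilon$ for all $r \geq R$. Split the sum as
\begin{align*}
\sum_{r=0}^{k-1} \lambda q^{k-r-1} \sum_{j=1}^m \|e_j(r+1)\| &= \sum_{r=0}^{R-1} \lambda q^{k-r-1} \sum_{j=1}^m \|e_j(r+1)\| \\
&\quad + \sum_{r=R}^{k-1} \lambda q^{k-r-1} \sum_{j=1}^m \|e_j(r+1)\|.
\end{align*}
In the first (finite) sum, each exponent $k-r-1$ grows without bound as $k\to\infty$ while $r$ stays in $\{0,\dots,R-1\}$, and the coefficient $\sum_j\|e_j(r+1)\|$ is uniformly bounded (again by compactness of $X_i$ together with $\|e_j(r+1)\| \leq \|x_j(r+1)\| + \|z_j(r)\| \leq 2D$). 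Hence that piece goes to $0$. In the second sum, bounding $\sum_j \|e_j(r+1)\| < \varepsilon$ and summing the geometric series yields an upper bound $\lambda \varepsilon /(1-q)$. Since $\varepsilon$ was arbitrary, the whole convolution term tends to $0$, completing the proof.
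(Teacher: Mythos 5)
Your proof is correct and takes essentially the same route as the paper: both rest on Lemma \ref{lemma:error_v_k}, part 2 of Proposition \ref{prop:conv_error}, a geometric-series bound, and the arbitrariness of $\varepsilon$ in a limit-superior argument. The only cosmetic difference is that the paper applies Lemma \ref{lemma:error_v_k} restarted at a late time $s$ chosen so that $\|e_i(k)\| \leq \epsilon$ for all $k > s$ --- which folds the head of your convolution into the initial-condition term $\lambda q^{k-s} \sum_{j=1}^m \|x_j(s)\|$ --- whereas you fix $s=0$ and split the convolution at $R$ by hand, which additionally requires your (correct) uniform bound $\|e_j(r+1)\| \leq 2D$ from Assumption \ref{ass:CompactLip}.
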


\begin{proof}
Under Assumptions \ref{ass:Convex}-\ref{ass:Network}, by the second part of Proposition \ref{prop:conv_error} we have that $\lim_{k \rightarrow \infty} \|e_i(k)\| = 0$, for all $i=1,\ldots,m$. Then, for any $\epsilon > 0$ we can choose $s > 0$ such that $\|e_i(k)\| \leq \epsilon$ for all $k > s$, for all $i=1,\ldots,m$.

By \eqref{eq:error_v_k} of Lemma \ref{lemma:error_v_k}, we then have that for all $i=1,\ldots,m$,
\begin{align}
\|x_i(k+1) &- v(k+1)\| \leq \lambda q^{k-s} \sum_{j=1}^m \|x_j(s)\| \nonumber \\
& ~~~~~~~~~~~~~~~+ m \lambda \epsilon \sum_{r=s}^{k-1} q^{k-r-1} + 2 \epsilon \nonumber \\
& = \lambda q^{k-s} \sum_{j=1}^m \|x_j(s)\| + m \lambda \epsilon \sum_{t=0}^{k-s-1} q^{t} + 2 \epsilon \nonumber \\
& < \lambda q^{k-s} \sum_{j=1}^m \|x_j(s)\| + m \lambda \epsilon \sum_{t=0}^{\infty} q^{t} + 2 \epsilon \nonumber \\
& \leq m \lambda D q^{k-s} + m \lambda \frac{1}{1-q} \epsilon + 2\epsilon, \label{eq:error_v_k_1}
\end{align}
where the equality is due to a change of the summation limits, and the last inequality is due to Assumption \ref{ass:CompactLip} and the fact that $q \in (0,1)$.

Taking limit superior in both sides of \eqref{eq:error_v_k_1} as $k \rightarrow \infty$,
\begin{align}
\lim\sup_{k \rightarrow \infty}\|x_i(k+1) - v(k+1)\| \leq m \lambda \frac{1}{1-q} \epsilon + 2\epsilon. \label{eq:error_v_k_2}
\end{align}
Note that taking the limit superior as $k \rightarrow \infty$ is well defined, since $\epsilon$ is assumed to be fixed, and hence also $s$. Notice also that the resulting quantity in the right-hand side of \eqref{eq:error_v_k_2} no longer depends on $s$.
Since $\epsilon > 0$ is arbitrary, \sg{relation} \eqref{eq:error_v_k_2} implies that $\lim_{k \rightarrow \infty} \|x_i(k+1) - v(k+1)\| = 0$, and hence $\lim_{k \rightarrow \infty} \|x_i(k) - v(k)\| = 0$, for all $i=1,\ldots,m$, thus concluding the proof.
\end{proof}

\subsubsection{Convergence and optimality} \label{sec:SecIVB}
In this subsection we will provide a proof of Theorem \ref{thm:optimality}.
To achieve this, we will first show an intermediate convergence result. Notice that by the first part of Proposition \ref{prop:conv_error} (under Assumptions \ref{ass:Convex}-\ref{ass:Weights}), $\sum_{k=1}^{\infty} \sum_{i=1}^m \|e_i(k)\|^2 < \infty$.
Letting then $N \rightarrow \infty$ in \eqref{eq:sum_e_bound} leads to the following summability result, which states that
\begin{align}
2 \bar{L} \sum_{k=1}^{\infty} c(k) \sum_{i=1}^m \|x_i(k+1) - \bar{v}(k+1)\| < \infty. \label{eq:sum_result}
\end{align}
The last statement enables us to show the following convergence result.
\begin{thm} \label{thm:alg_conv}
Consider Assumptions \ref{ass:Convex}-\ref{ass:Weights} and Algorithm \ref{alg:Alg1}. We have that, for any minimizer $x^* \in X^*$, the sequence $\big \{ \|x_i(k) - x^*\| \big \}_{k \geq 0}$ is convergent for all $i=1,\ldots,m$.
\end{thm}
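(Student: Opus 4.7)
My plan is to exploit the descent-style inequality of Lemma \ref{lemma:relation} to obtain a quasi-Fej\'er recursion for $\sum_{i=1}^m \|x_i(k) - x^*\|^2$, then transfer the resulting convergence of this sum into convergence of each individual distance by invoking the consensus result of Proposition \ref{prop:conv_consensus}.

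Starting from \eqref{eq:sep_obj4}, and noting that $\bar{v}(k+1) \in \bigcap_{i=1}^m X_i$ combined with the optimality of $x^*$ gives $\sum_{i=1}^m f_i(\bar{v}(k+1)) \geq \sum_{i=1}^m f_i(x^*)$, I can drop that contribution together with the nonnegative $\sum_i \|e_i(k+1)\|^2$ from the left-hand side to obtain
\begin{align*}
\sum_{i=1}^m \|x_i(k+1) - x^*\|^2 \leq \sum_{i=1}^m \|x_i(k)-x^*\|^2 + 2\bar{L}\,c(k)\sum_{i=1}^m \|x_i(k+1)-\bar{v}(k+1)\|.
\end{align*}
Setting $a_k = \sum_{i=1}^m \|x_i(k)-x^*\|^2 \geq 0$ and $b_k = 2\bar{L}\,c(k)\sum_{i=1}^m \|x_i(k+1)-\bar{v}(k+1)\|$, the estimate \eqref{eq:sum_result} derived just above the theorem asserts precisely that $\sum_{k=1}^\infty b_k < \infty$. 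A standard real-analysis fact (a nonnegative sequence with summable forward increments is convergent, proved by bounding $a_k \leq a_1 + \sum_{j=1}^{k-1} b_j$ and comparing $\limsup$ with $\liminf$ via the tail of $\sum b_j$) then ensures that $\{a_k\}$ converges to some finite limit $L \geq 0$.

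To move from convergence of the aggregate $a_k$ to convergence of each $\|x_i(k)-x^*\|$ individually, I use the orthogonality $\sum_{i=1}^m (x_i(k) - v(k)) = 0$, which follows immediately from the definition of $v(k)$ in \eqref{eq:v_k}, to write the variance--mean decomposition
\begin{align*}
\sum_{i=1}^m \|x_i(k)-x^*\|^2 = \sum_{i=1}^m \|x_i(k)-v(k)\|^2 + m\,\|v(k)-x^*\|^2.
\end{align*}
Proposition \ref{prop:conv_consensus} guarantees that the first sum on the right vanishes as $k \to \infty$, so $\|v(k)-x^*\|^2 \to L/m$. The reverse triangle inequality $\big| \|x_i(k)-x^*\| - \|v(k)-x^*\| \big| \leq \|x_i(k)-v(k)\|$ and another application of Proposition \ref{prop:conv_consensus} then force $\|x_i(k)-x^*\|$ to converge to the common limit $\sqrt{L/m}$ for every $i=1,\ldots,m$.

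The main (and mild) obstacle is the quasi-Fej\'er step: making sure the terms dropped from the left-hand side of \eqref{eq:sep_obj4} are genuinely nonnegative along the whole trajectory, and that \eqref{eq:sum_result} is cited as the direct summability certificate for the perturbation $b_k$. The rest is bookkeeping that hinges entirely on machinery already established earlier in Section \ref{sec:SecIII}.
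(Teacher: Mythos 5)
Your proposal is correct and follows essentially the same route as the paper's proof: both start from Lemma \ref{lemma:relation}, drop the nonnegative terms using $\bar{v}(k+1) \in \bigcap_{i=1}^m X_i$ together with the optimality of $x^*$, invoke the summability certificate \eqref{eq:sum_result} to conclude convergence of $\sum_{i=1}^m \|x_i(k)-x^*\|^2$ (the paper via an explicit $\limsup$/$\liminf$ comparison over the summation indices $M,N$, you via the equivalent quasi-Fej\'er lemma), and then transfer to the individual distances through Proposition \ref{prop:conv_consensus}. The only cosmetic difference is the final step, where you use the exact identity $\sum_{i=1}^m\|x_i(k)-x^*\|^2 = \sum_{i=1}^m\|x_i(k)-v(k)\|^2 + m\|v(k)-x^*\|^2$, valid since $\sum_{i=1}^m (x_i(k)-v(k))=0$, which moreover identifies the common limit $\sqrt{L/m}$, in place of the paper's pair of triangle inequalities relating $\|v(k)-x^*\|$ to the average of the $\|x_i(k)-x^*\|$.
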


\begin{proof}
By Lemma \ref{lemma:relation}, \eqref{eq:sep_obj4} holds. Summing then \eqref{eq:sep_obj4} with respect to $k = M,\ldots,N$ for arbitrary $M, N \in \mathbb{N}_+$, we have that, for all $x^* \in X^*$,
\begin{align}
2&\sum_{k=M}^N c(k)\sum_{i=1}^m \Big ( f_i(\bar{v}(k+1)) - f_i(x^*)\Big ) \nonumber \\
&~~~~~~~+ \sum_{k=M}^N \sum_{i=1}^m \|e_i(k+1)\|^2  + \sum_{i=1}^m \|x_i(N+1)-x^*\|^2  \nonumber \\
& \leq \sum_{i=1}^m \|x_i(M)-x^*\|^2 \nonumber \\
&~~~~~~~+ 2 \bar{L} \sum_{k=M}^{N} c(k) \sum_{i=1}^m \|x_i(k+1) - \bar{v}(k+1)\|. \label{eq:sep_obj6n}
\end{align}

As in the proof of Proposition \ref{prop:conv_error}, notice that since $\bar{v}(k+1) \in \bigcap_{i=1}^m X_i$ for all $k \geq 0$, and $x^*$ is a minimizer of $\mathcal{P}$, $2\sum_{k=M}^N c(k)\sum_{i=1}^m \Big ( f_i(\bar{v}(k+1)) - f_i(x^*)\Big ) \geq 0$. Moreover, $\sum_{k=M}^N \sum_{i=1}^m \|e_i(k+1)\|^2 \geq 0$, hence these two terms can be dropped from the left-hand side of \eqref{eq:sep_obj6n}. Therefore, by \eqref{eq:sep_obj6n} we have that
\begin{align}
\sum_{i=1}^m \|&x_i(N+1)-x^*\|^2 \leq \sum_{i=1}^m \|x_i(M)-x^*\|^2 \nonumber \\
&+ 2 \bar{L} \sum_{k=M}^{N} c(k) \sum_{i=1}^m \|x_i(k+1) - \bar{v}(k+1)\|. \label{eq:sep_obj7}
\end{align}

Notice that, under Assumptions \ref{ass:Convex}-\ref{ass:Network}, the summability statement of \eqref{eq:sum_result} holds. Taking then in \eqref{eq:sep_obj7} the limit superior as $N \rightarrow \infty$ and the limit inferior as $M \rightarrow \infty$, we have that
\begin{align}
\lim \sup_{N \rightarrow \infty} \sum_{i=1}^m \|&x_i(N+1)-x^*\|^2 \nonumber \\
&\leq \lim \inf_{M \rightarrow \infty} \sum_{i=1}^m \|x_i(M)-x^*\|^2 . \label{eq:sep_obj8}
\end{align}
The last statement, together with the fact that the sequence $\big \{ \sum_{i=1}^m \|x_i(k) - x^*\| \big \}_{k \geq 0}$ is bounded due to Assumption \ref{ass:CompactLip}, implies that $\big \{ \sum_{i=1}^m \|x_i(k) - x^*\| \big \}_{k \geq 0}$ converges for all $x^* \in X^*$.

Consider now $v(k) = \frac{1}{m} \sum_{i=1}^m x_i(k)$. We have that
\begin{align}
\| v(k) - x^* \| = \| \frac{1}{m} \sum_{i=1}^m x_i(k) - x^* \| \leq \frac{1}{m} \sum_{i=1}^m \| x_i(k) - x^* \|. \label{eq:sep_obj9}
\end{align}
Moreover, \sg{from}
\begin{align}
\| x_i(k) - x^* \| \leq \|v(k) - x^* \| + \|x_i(k) - v(k) \|, \label{eq:sep_obj10}
\end{align}
\sg{it also holds that} $\|v(k) - x^* \| \geq \frac{1}{m} \sum_{i=1}^m \| x_i(k) - x^* \| - \frac{1}{m} \sum_{i=1}^m \|x_i(k) - v(k) \|$, \sg{which, together with} \eqref{eq:sep_obj9}, and since, for all $i=1,\ldots,m$, $\lim_{k \rightarrow \infty} \|x_i(k) - v(k)\| = 0$ by Proposition \ref{prop:conv_consensus}, and $\big \{ \sum_{i=1}^m \|x_i(k) - x^*\| \big \}_{k \geq 0}$ is convergent for all $x^* \in X^*$, \sg{gives} ${\| v(k) - x^* \|}_{k \geq 0}$ is also convergent for any $x^* \in X^*$. \sg{From \eqref{eq:sep_obj10} and}
\begin{align*}
\|v(k) - x^* \| - \|x_i(k) - v(k)\| \leq \| x_i(k) - x^* \|,
\end{align*}
the convergence of $\| v(k) - x^* \|$, along with $\lim_{k \rightarrow \infty} \|x_i(k) - v(k)\| = 0$, for all $i=1,\ldots,m$, gives the statement of the theorem.
\end{proof}

We are now in a position to prove Theorem \ref{thm:optimality} of Section \ref{sec:SecIIC}, showing that there exists some minimizer $x^* \in X^*$ of $\mathcal{P}$, such that $\lim_{k \rightarrow \infty} \|x_i(k) - x^*\| = 0$, for all $i=1,\ldots,m$, i.e., all agents reach consensus to a common minimizer of $\mathcal{P}$.

\begin{proofof}{Theorem \ref{thm:optimality}}
By Lemma \ref{lemma:relation}, \eqref{eq:sep_obj4} holds. Fix any $\alpha_1 \in (0,1)$. Under Assumptions \ref{ass:CompactLip}-\ref{ass:Network}, let $\alpha_2, \alpha_3$ as in \eqref{eq:constants_lm}, and consider \eqref{eq:sum_e_bound}. As in the proof of Proposition \ref{prop:conv_error}, sum \eqref{eq:sep_obj4} with respect to $k = 1,\ldots,N$ for an arbitrary $N \in \mathbb{N}_+$, and upper-bound the resulting last term in the right-hand side of \eqref{eq:sep_obj4} using \eqref{eq:sum_e_bound}. We then have that, for all $x^* \in X^*$, \eqref{eq:sep_obj5} holds.

Since $\sum_{k=1}^N \sum_{i=1}^m \|e_i(k+1)\|^2 \geq 0$ and $\sum_{i=1}^m \|x_i(N+1)-x^*\|^2 \geq 0$, we can drop the two terms in the left-hand side of \eqref{eq:sep_obj5}. Therefore, by \eqref{eq:sep_obj5}, we have that
\begin{align}
2\sum_{k=1}^N &c(k)\sum_{i=1}^m \Big ( f_i(\bar{v}(k+1)) - f_i(x^*)\Big ) \nonumber \\
& \leq \sum_{i=1}^m \|x_i(1)-x^*\|^2 + \alpha_2 \sum_{k=1}^N c(k)^2 + \alpha_3. \label{eq:sep_obj8}
\end{align}

Let now $N \rightarrow \infty$. Notice that, by Assumptions \ref{ass:CompactLip} and \ref{ass:ConvCoef}, $\sum_{i=1}^m \|x_i(1)-x^*\|^2 + \alpha_2 \sum_{k=1}^{\infty} c(k)^2 + \alpha_3 < \infty$. Therefore, $2\sum_{k=1}^{\infty} c(k)\sum_{i=1}^m \Big ( f_i(\bar{v}(k+1)) - f_i(x^*)\Big ) < \infty$; however, $\sum_{k=0}^{\infty} c(k) = \infty$, by Assumption \ref{ass:ConvCoef}. Hence,
\begin{align}
\lim \inf_{k \rightarrow \infty} \sum_{i=1}^m \Big ( f_i(\bar{v}(k+1)) - f_i(x^*)\Big ) = 0. \label{eq:opt_1}
\end{align}
Due to the continuity of $f_i(\cdot)$, $i=1,\ldots,m$, under the convexity requirement of Assumption \ref{ass:Convex}, \eqref{eq:opt_1} implies that there exists some $\bar{x}^* \in X^*$ such that
\begin{align}
\lim \inf_{k \rightarrow \infty} \|\bar{v}(k) - \bar{x}^*\| = 0. \label{eq:opt_2}
\end{align}
In other words, $\big \{\| \bar{v}(k) - \bar{x}^*\|\big \}_{k \geq 0}$ converges to $0$ across a subsequence.

By Proposition \ref{prop:conv_consensus} and Lemma \ref{lemma:error_bar_v_k} we have that $\lim_{k \to \infty} \|x_i(k) - \bar{v}(k)\| = 0$, for all $i=1,\ldots,m$.
Therefore, and since $\|x_i(k) - \bar{x}^*\| \leq \|\bar{v}(k) - \bar{x}^*\| + \|x_i(k) - \bar{v}(k)\|$, by \eqref{eq:opt_2} we have that, for all $i=1,\ldots,m$,
\begin{align}
\lim \inf_{k \rightarrow \infty} \|x_i(k) - \bar{x}^*\| = 0. \label{eq:opt_3}
\end{align}

On the other hand, it was shown in Theorem \ref{thm:alg_conv} that, for all $i=1,\ldots,m$, $\big \{ \|x_i(k) - x^*\| \big \}_{k \geq 0}$ converges for all $x^* \in X^*$, and hence also for $\bar{x}^*$. Hence, it must be $\lim_{k \to \infty} \|x_i(k) - \bar{x}^*\| = 0$, for all $i=1,\ldots,m$, which concludes the proof.
\end{proofof}

Note that a direct byproduct of Proposition \ref{prop:conv_consensus}, Theorem \ref{thm:optimality} and Lemma \ref{lemma:error_bar_v_k}, is that, there exists $x^* \in X^*$, such that $\lim_{k \rightarrow \infty} \|x_i(k) - x^*\| = \lim_{k \rightarrow \infty} \|v(k) - x^*\| = \lim_{k \rightarrow \infty} \|\bar{v}(k) - x^*\| = 0$, for all $i=1,\ldots,m$.

\section{Conclusion} \label{sec:secVI}
In this paper a unifying framework for distributed convex optimization over time-varying networks, in the presence of constraints and uncertainty is provided. We constructed an iterative, proximal minimization based algorithm, and analyzed its convergence and optimality properties.
To deal with the case where the agents' constraint sets are affected by a possibly common uncertainty vector, a scenario-based methodology was adopted, allowing agents to use a different set of uncertainty scenarios.

Current work concentrates on three main directions: 1) Investigating the convergence rate properties of the developed algorithm, and the potential of an asynchronous implementation. 2) Developing rolling horizon implementations, extending the work of \cite{Tsitsiklis_Athans_1984} to the case where constraints are also present. 3) Analyzing the quality of the scenario-based solutions, providing confidence intervals connecting the optimal values of $\mathcal{P}_{\bar{N}}$, $\mathcal{P}_N$ with the one of $\mathcal{P}_{\delta}$ by exploiting the results of \cite{Kanamori_Takeda_2012,Mohajerin_etal_2015}. 4) From an application point of view, the main focus is on applying the proposed algorithm to the problem of energy efficient control of a building network \cite{Ioli_etal_2015}.

\section*{Appendix} \label{sec:secApp}

\begin{proofof}{Lemma \ref{lemma:error_bar_v_k}}
We have that for all $k\geq 0$
\begin{align}
\|x_i(k) &- \bar{v}(k)\| \nonumber \\
& = \|\frac{\epsilon(k) + \rho}{\epsilon(k) + \rho} x_i(k) - \frac{\epsilon(k)}{\epsilon(k) + \rho} \bar{x} - \frac{\rho}{\epsilon(k) + \rho} v(k)\| \nonumber \\
& \leq \frac{1}{\epsilon(k) + \rho} \big ( \epsilon(k) \|x_i(k) - \bar{x}\| + \rho \|x_i(k) - v(k)\| \big ) \nonumber \\
& \leq \frac{1}{\rho} \big ( \epsilon(k) \|x_i(k) - \bar{x}\| + \rho \|x_i(k) - v(k)\| \big ), \label{eq:bar_v_k_1}
\end{align}
where the last inequality is due to the fact that $\epsilon(k) \geq 0$.

By the definition of $\dist(\cdot,\cdot)$, and since $x_i(k) \in X_i$ for all $i=1,\ldots,m$, we have that for all $k\geq 0$
\begin{align}
\epsilon(k) &= \sum_{i=1}^m \dist (v(k),X_i) \leq \sum_{i=1}^m \|x_i(k) - v(k)\|. \label{eq:bar_v_k_2}
\end{align}
By \eqref{eq:bar_v_k_1}, \eqref{eq:bar_v_k_2} we have that
\begin{align}
\|x_i(k) - \bar{v}(k)\| \leq \frac{1}{\rho} \Big ( \sum_{i=1}^m \|x_i(k) &- v(k)\| \Big ) \|x_i(k) - \bar{x}\| \nonumber \\
&+ \|x_i(k) - v(k)\|. \label{eq:bar_v_k_3}
\end{align}
Summing both sides of \eqref{eq:bar_v_k_3} with respect to $i=1,\ldots,m$,
\begin{align}
\sum_{i=1}^m & \|x_i(k) - \bar{v}(k)\| \nonumber \\ & \leq \frac{1}{\rho} \Big ( \sum_{i=1}^m \|x_i(k) - v(k)\| \Big ) \Big ( \sum_{i=1}^m \|x_i(k) - \bar{x}\| \Big ) \nonumber \\ &~~~~~+ \sum_{i=1}^m \|x_i(k) - v(k)\| \nonumber \\
& \leq \Big ( \frac{2}{\rho} m D + 1 \Big ) \sum_{i=1}^m \|x_i(k) - v(k)\|, \label{eq:bar_v_k_4}
\end{align}
where the last inequality is since $\|x_i(k) - \bar{x}\| \leq \|x_i(k)\| + \|\bar{x}\| \leq 2D$ for all $i=1,\ldots,m$, ($D$ as defined above \eqref{eq:Lipschitz}), by Assumption \ref{ass:CompactLip}. This concludes the proof.
\end{proofof}

\begin{proofof}{Lemma \ref{lemma:error_v_k}}
By \eqref{eq:dyn_sys_x}, \eqref{eq:dyn_sys_v}, for all $k,s$ with $s \geq 0$, $k > s$, and for all $i=1,\ldots,m$ we have that
\begin{align}
\|x_i(k+1) &- v(k+1)\| = \Big | \Big | \sum_{j=1}^m \Big ( \big [ \Phi(k,s) \big ]_j^i - \frac{1}{m} \Big ) x_j(s) \nonumber \\
& + \sum_{r=s}^{k-1} \sum_{j=1}^m \Big ( \big [ \Phi(k,r+1) \big ]_j^i - \frac{1}{m} \Big ) e_j(r+1) \nonumber \\
& + e_i(k+1) - \frac{1}{m} \sum_{j=1}^m e_j(k+1) \Big | \Big | \nonumber \\
\leq \sum_{j=1}^m \Big | \big [\Phi(k&,s) \big ]_j^i - \frac{1}{m} \Big | \|x_j(s)\| \nonumber \\
& + \sum_{r=s}^{k-1} \sum_{j=1}^m \Big | \big [\Phi(k,r+1) \big ]_j^i - \frac{1}{m} \Big | \|e_j(r+1)\| \nonumber \\
& + \|e_i(k+1)\| + \frac{1}{m} \sum_{j=1}^m \|e_j(k+1)\|. \label{eq:error_v_1}
\end{align}

Under Assumptions \ref{ass:Network} and \ref{ass:Weights}, by Lemma 4 of \cite{Nedic_Ozdaglar_2009}, for all $k,s$ with $s \geq 0$, $k \geq s$ we have that
\begin{align}
\Big | \big [\Phi(k,s) \big ]_j^i &- \frac{1}{m} \Big | \nonumber \\
& \leq 2 \frac{1 + \eta^{-(m-1)T}}{1 - \eta^{(m-1)T}} \big ( 1 - \eta^{(m-1)T} \big )^{\frac{k-s}{(m-1)T}}. \label{eq:bound_tr}
\end{align}
Setting $\lambda = 2 \big ( 1 + \eta^{-(m-1)T} \big ) / \big ( 1 - \eta^{(m-1)T} \big )$ and $q = \big ( 1 - \eta^{(m-1)T} \big )^{\frac{1}{(m-1)T}}$, \eqref{eq:bound_tr} implies that $\Big | \big [\Phi(k,s) \big ]_j^i - \frac{1}{m} \Big | \leq \lambda q^{k-s}$, for all $k \geq s$. Noticing that $q \in (0,1)$, since $\eta \in (0,1)$, \eqref{eq:error_v_1} and \eqref{eq:bound_tr} lead to \eqref{eq:error_v_k}, thus concluding the proof.
\end{proofof}

\begin{proofof}{Lemma \ref{lemma:sum_error}}
Fix any $N \in \mathbb{N}_+$ and, under Assumptions \ref{ass:Convex} - \ref{ass:Weights}, consider \eqref{eq:sum_error_all}. To show \eqref{eq:sum_e_bound}, we treat each of the three terms in the right-hand side of \eqref{eq:sum_error_all} separately.
\begin{term} \label{term:first}
$2 m \mu \lambda \bar{L} \sum_{k=1}^N  c(k) q^{k} \sum_{i=1}^m \|x_i(0)\|$.
\end{term}
\noindent Due to Assumption \ref{ass:CompactLip}, $\|x_i(0)\| \leq D$, for all $i=1,\ldots,m$. Therefore, $\sum_{i=1}^m \|x_i(0)\| \leq mD$. The last statement together with the fact that, under Assumption \ref{ass:ConvCoef}, $c(k) \leq c(1)$, leads to
\begin{align}
2 m \mu \lambda \bar{L} \sum_{k=1}^N  c(k) q^{k}  \sum_{i=1}^m \|x_i(0)\| & \leq 2 m^2 \mu \lambda \bar{L} D c(1) \sum_{k=1}^N q^{k} \nonumber \\
& <  \frac{2 m^2 \mu \lambda \bar{L} Dc(1) q}{1-q}, \label{eq:sum_termA_1}
\end{align}
where the last step is due to the fact that, by Lemma \ref{lemma:error_v_k}, $q \in (0,1)$ and hence $\sum_{k=1}^{\infty} q^{k} = q \sum_{k=0}^{\infty} q^{k} = q / (1-q)$.

\begin{term} \label{term:second}
$2 m \mu \lambda \bar{L} \sum_{k=1}^N \sum_{r=0}^{k-1} c(k) q^{k-r-1} \sum_{i=1}^m \|e_i(r+1)\|$.
\end{term}
\noindent Fix any $\alpha_1 \in (0,1)$. We then have that
\begin{align}
2 &m \mu \lambda \bar{L} \sum_{k=1}^N \sum_{r=0}^{k-1} c(k) q^{k-r-1} \sum_{i=1}^m \|e_i(r+1)\| \nonumber \\
& = \sum_{i=1}^m  \sum_{k=1}^N \sum_{r=0}^{k-1} 2 \Big ( m \mu \lambda \bar{L} \sqrt{\frac{2}{\alpha_1 (1-q)}} c(k) \Big ) \nonumber \\
& ~~~~~~~~~~~~~~~~ \Big ( \sqrt{\frac{\alpha_1 (1-q)}{2}} \|e_i(r+1)\| \Big ) q^{k-r-1} \nonumber \\
& \leq \sum_{i=1}^m  \sum_{k=1}^N \sum_{r=0}^{k-1} m^2 \mu^2 \lambda^2 \bar{L}^2 \frac{2}{\alpha_1 (1-q)} q^{k-r-1} c(k)^2 \nonumber \\
& + \sum_{i=1}^m  \sum_{k=1}^N \sum_{r=0}^{k-1} \frac{\alpha_1 (1-q)}{2} q^{k-r-1} \|e_i(r+1)\|^2, \label{eq:sum_termB_1}
\end{align}
where in the last step we used the fact that $2xy \leq x^2 + y^2$ for all $x,y \in \mathbb{R}$.
We have that,
\begin{align}
\sum_{k=1}^N &\sum_{r=0}^{k-1} q^{k-r-1} c(k)^2 \nonumber \\
&\leq \sum_{k=1}^N \sum_{r=0}^{k-1} q^{k-r-1} c(r)^2 = \sum_{r=0}^{N-1} c(r)^2 \sum_{t=0}^{N-r-1} q^t \nonumber \\
& <  \sum_{r=0}^{N-1} c(r)^2 \sum_{t=0}^{\infty} q^t = \sum_{k=0}^{N-1} \frac{1}{1-q} c(k)^2 \nonumber \\
& < \frac{1}{1-q} c(0)^2 + \sum_{k=1}^{N} \frac{1}{1-q} c(k)^2, \label{eq:sum_termB_2}
\end{align}
where the first inequality is due to the fact that, under Assumption \ref{ass:ConvCoef}, $c(k) \leq c(r)$ since $k > r$. The first equality is due to series convolution, in the last equality we performed an index change from $r$ to $k$ and the last inequality is included to introduce the desired summation limits.

Repeating the same derivation as in \eqref{eq:sum_termB_2} with $\|e_i(r+1)\|^2$ in place of $c(r)^2$ leads to
\begin{align}
\sum_{k=1}^N \sum_{r=0}^{k-1} &q^{k-r-1} \|e_i(r+1)\|^2 \nonumber \\
& < \frac{1}{1-q} \|e_i(1)\|^2 + \sum_{k=1}^{N} \frac{1}{1-q} \|e_i(k+1)\|^2 \nonumber \\
& \leq \frac{4}{1-q} D^2 + \sum_{k=1}^{N} \frac{1}{1-q} \|e_i(k+1)\|^2, \label{eq:sum_termB_3}
\end{align}
where the last inequality is due to the fact that $\|e_i(1)\| \leq 2D$ under Assumption \ref{ass:CompactLip}.

By \eqref{eq:sum_termB_1}, \eqref{eq:sum_termB_2}, \eqref{eq:sum_termB_3}, and noticing that some terms are independent of $i$, we have that
\begin{align}
2 m \mu \lambda \bar{L} &\sum_{k=1}^N \sum_{r=0}^{k-1} c(k) q^{k-r-1} \sum_{i=1}^m \|e_i(r+1)\| \nonumber \\
& < 2 m^3 \mu^2 \lambda^2 \bar{L}^2 \frac{1}{\alpha_1 (1-q)^2} c(0)^2 + 2 \alpha_1 m D^2 \nonumber \\
& + 2 m^3 \mu^2 \lambda^2 \bar{L}^2 \frac{1}{\alpha_1 (1-q)^2} \sum_{k=1}^N c(k)^2 \nonumber
\end{align}
\begin{align}
& + \frac{\alpha_1}{2} \sum_{k=1}^N \sum_{i=1}^m \|e_i(k+1)\|^2, \label{eq:sum_termB_4}
\end{align}

\begin{term} \label{term:third}
$4 \mu \bar{L} \sum_{k=1}^N c(k) \sum_{i=1}^m \|e_i(k+1)\|$.
\end{term}
\noindent We have that
\begin{align}
4 &\mu \bar{L} \sum_{k=1}^N c(k) \sum_{i=1}^m \|e_i(k+1)\| \nonumber \\
& = \sum_{k=1}^N \sum_{i=1}^m 2 \Big ( 2 \sqrt{\frac{2}{\alpha_1}} \mu \bar{L} c(k) \Big ) \Big ( \sqrt{\frac{\alpha_1}{2}} \|e_i(k+1)\| \Big ) \nonumber \\
& \leq  \sum_{k=1}^N \sum_{i=1}^m \frac{8}{\alpha_1} \mu^2 \bar{L}^2 c(k)^2 + \frac{\alpha_1}{2} \sum_{k=1}^N \sum_{i=1}^m \|e_i(k+1)\|^2 \nonumber \\
& = \frac{8}{\alpha_1} m \mu^2 \bar{L}^2 \sum_{k=1}^N c(k)^2 + \frac{\alpha_1}{2} \sum_{k=1}^N \sum_{i=1}^m \|e_i(k+1)\|^2, \label{eq:sum_termC_1}
\end{align}
where for the first inequality we follow the same reasoning with the last step of \eqref{eq:sum_termB_1}, and the second equality is since the first term of the first inequality is independent of $i$.

We are now in a position to show \eqref{eq:sum_e_bound}.
Substituting \eqref{eq:sum_termA_1}, \eqref{eq:sum_termB_4} and \eqref{eq:sum_termC_1} in \eqref{eq:sum_error_all}, and setting $\alpha_2, \alpha_3$ according to \eqref{eq:constants_lm}, leads to
\eqref{eq:sum_e_bound} (the inequality is strict since the inequalities in \eqref{eq:sum_termA_1}, \eqref{eq:sum_termB_4} are also strict), thus concluding the proof.
\end{proofof}

\end{document}